\newcommand{\Z}{\mathbb Z}
\newcommand{\C}{\mathbb C}
\newcommand{\R}{\mathbb R}
\newtheorem{lem}{Lemma}[section]
\newtheorem{defn}[lem]{Definition}
\newtheorem{co}[lem]{Corollary}
\newtheorem{thm}[lem]{Theorem}
\newtheorem{prop}[lem]{Proposition}
\begin{document}

\title{Non Hyperbolic Free-by-Cyclic and One-Relator Groups}
\author{J.\,O.\,Button and R.\,P.\,Kropholler}

\newcommand{\Addresses}{{
  \bigskip
  \footnotesize

  J.\,O.\,Button, \textsc{Selwyn College, University of Cambridge,
Cambridge CB3 9DQ, UK}\par\nopagebreak
  \textit{E-mail address}: \texttt{j.o.button@cam.ac.uk}

  \medskip

  R.\,P.\,Kropholler, \textsc{Mathematical Institute,
    University of Oxford, Oxford OX2 6GG, UK}\par\nopagebreak
  \textit{E-mail address}: 
\texttt{Robert.Kropholler@maths.ox.ac.uk}

}}

\maketitle
\begin{abstract}
We show that the free-by-cyclic groups of the form $F_2\rtimes \Z$ act 
properly cocompactly on CAT(0) square complexes. We also show using 
generalised Baumslag-Solitar groups that all known groups defined by
a 2-generator 1-relator presentation are either SQ-universal or are
cyclic or isomorphic to $BS(1,j)$. 
Finally we consider free-by-cyclic groups which are 
not relatively hyperbolic with respect to any collection of subgroups.
\end{abstract}

\section{Introduction}
The recent far reaching work of Agol \cite{ag} and Wise \cite{wslng} 
proves that a word hyperbolic group $G$
acting properly and cocompactly on a CAT(0) cube complex 
must be virtually special, implying that $G$ has a
finite index subgroup which embeds in a right angled Artin group (RAAG).
A host of very strong conclusions then apply to the group $G$, of which 
the two that will concern us here are being linear (which we take to mean
over $\C$ although it is even true over $\Z$) and (if $G$ is not
elementary) being large, namely $G$ has a finite index subgroup 
surjecting to a non abelian free group.

However, if $G$ is a finitely presented non hyperbolic group acting 
properly and cocompactly on a CAT(0) cube complex, then the above consequences need no longer
hold, indeed $G$ can even be simple \cite{burgermozes}. Therefore
suppose we have a class of finitely presented groups which is believed
to be a well behaved class, but which contains both word hyperbolic and
non word hyperbolic examples. We can ask: first, do all examples in this
class have a nice geometric action, namely a proper cocompact action
on a CAT(0) cube complex, and second: do they all enjoy our
strong group theoretic properties which are a consequence of being a
virtually special group, namely being linear or being large. Note that
for non word hyperbolic groups, satisfying this geometric condition
will not necessarily imply these group theoretic properties.

In this paper the classes of groups we will be interested in are the
following three: groups of the form $F_k\rtimes_\alpha\Z$ for
$F_k$ a free group of finite rank $k$ and $\alpha$ an automorphism of $F_k$
(we refer to these as ``free-by-cyclic groups'');
the more general class of ascending HNN extensions $F_k*_\theta$
of finite rank free groups, where rather than $\theta$ having to be an
automorphism, as in the free-by-cyclic case, we allow $\theta$ to be
any injective endomorphism of $F_k$; and finally the class of groups
admitting a presentation with 2 generators and 1 relator, which we
refer to as 2-generator 1-relator groups. This last class neither contains
nor is contained in either of the other two classes but there is
considerable overlap.

In the free-by-cyclic case, it was recently shown in \cite{hawsn} that
such word hyperbolic groups do act properly and cocompactly on a CAT(0)
cube complex, and therefore are virtually special groups.
However Gersten in \cite{ger} gave
an example of a free-by-cyclic group which cannot act properly and
cocompactly on any CAT(0) space, so this result cannot hold in general in the 
non word hyperbolic case. Moreover \cite{bb},\cite{bridsonreeves} shows that 
there are free-by-cyclic groups which are not automatic, whereas groups that 
act nicely on CAT(0) cube complexes are automatic \cite{nibloreeves}. 
In Gersten's example the free group
has rank 3 but in Section 2 we consider
free-by-cyclic groups of the form
$F_2\rtimes_\alpha\Z$, none of which are word hyperbolic. Therefore
it is of interest to show directly that they act properly and
cocompactly on CAT(0) cube complexes, which we do in Section 2.
This work is based on unpublished work of Bridson and Lustig. 
Those authors give us the method of changing the natural topological model of 
the standard 2 complex (shown in Figure \ref{2complex}) to get rid of a 
pocket of positive curvature, and in this way they go on to show that these 
groups act on 2-dimensional CAT(0) complexes. 
We expand on this by showing that one can build these complexes from squares. 
This also strengthens a result of
T. Brady \cite{brady_complexes_1995}
who showed that there is a 2-complex of 
non-positive curvature made from equilateral 
triangles with fundamental group $F_2\rtimes_\alpha\Z$.

In Section 3 we consider 2-generator 1-relator groups. 
It is conceivable, but very definitely open, that a word
hyperbolic 2-generator 1-relator group always acts properly
and cocompactly on a CAT(0) cube complex
(for instance see \cite{wst} Conjecture 1.9).
However on moving to the non word hyperbolic case we see a different
picture emerging because a group acting properly and cocompactly on any
CAT(0) space cannot contain a Baumslag-Solitar group $BS(m,n)$ where
$|m|\neq |n|$. Thus
the examples of such nasty Baumslag-Solitar groups
as $BS(2,3)$ mean that we need not always have largeness nor linearity, or
even residual finiteness. In fact even restricting to residually
finite 2-generator 1-relator groups will not imply
linearity in general. This is because it was shown in \cite{weh} that
the group $\langle s,a,b|sas^{-1}=a^m,sbs^{-1}=b^n\rangle$ is not linear
over any field if $|m|,|n|>1$, and it was pointed out in \cite{drsp} that
the group
\[\langle t,a,b|tat^{-1}=b,tbt^{-1}=a^m\rangle\]
which is indeed a 2-generator 1-relator group
$\langle t,a|t^2at^{-2}=a^m\rangle$
that is known to be residually finite, 
contains this as an index 2 subgroup when $m=n$.

However, in looking for a ``large'' property that we hope is
held by all 2-generator 1-relator groups except for the soluble groups
$BS(1,m)$ and $\Z$, 
including the non residually finite groups, we
are led to the concept of a group $G$ being SQ-universal: namely that
every countable group embeds in a quotient of $G$. It was conjectured
by P.\,M.\,Neumann in \cite{pmn} back in 1973 that a non cyclic
1-relator group is either SQ-universal or isomorphic to $BS(1,m)$. Now
it was shown in \cite{sacsch} that a group having a 1-relator
presentation with at least 3 generators is SQ-universal, leaving
the 2-generator 1-relator case. Also \cite{ol} from 1995
showed that all non
elementary word hyperbolic groups are SQ-universal and this was generalized
to non elementary groups which are hyperbolic relative to any collection
of proper subgroups in \cite{amo} from 2007.

Recently the concept of a group being acylindrically
hyperbolic, which is more general than being
hyperbolic with respect to a collection of proper subgroups and
which implies SQ-universality, was
introduced in \cite{os} and studied in \cite{mios} where one
application was to 2-generator 1-relator groups. The authors divided
these groups into three classes with the first consisting of groups 
that they could show were acylindrically hyperbolic. We prove
in Theorem \ref{sqcs2} that all the groups in their second case, which
they show are not acylindrically hyperbolic,
are indeed SQ-universal unless equal to $BS(1,m)$.
In fact every group here is
formed by taking an HNN extension with base equal to 
a quotient of some free-by-cyclic group $F_k\rtimes_\alpha\Z$
for $\alpha$ finite order, along with infinite cyclic edge groups.
The proof proceeds by also identifying them as
generalized Baumslag-Solitar groups, whereupon we show more generally
in Theorem \ref{sqcs2} that any generalized Baumslag-Solitar group
either is SQ-universal or is isomorphic to $BS(1,m)$ or $\Z$.

This leaves their third case, which is exactly the class of 2-generator
1-relator groups that are ascending HNN extensions of finite
rank free groups.
Here we are not quite able to establish SQ-universality
of all of these groups not equal to $BS(1,m)$ or $\Z$,
though it is known to hold for the free-by-cyclic
case, but we do show in Corollary \ref{sq} that the only possible
exception would be a 2-generator 1-relator group equal to a strictly
ascending HNN extension of a finite rank free group which either fails
to be word hyperbolic and contains no Baumslag-Solitar subgroup, or
does contain a Baumslag-Solitar subgroup (but does not contain
$\Z\times\Z$) and  where all finite index subgroups have first Betti
number equal to 1. In both cases it is conjectured that no such
examples exist, so we have established P.\,M.\,Neumann's conjecture
for all the known 2-generator 1-relator groups.
We also obtain some general
unconditional statements,
such as Corollary \ref{comm} which says that
if the relator is in the commutator subgroup of $F_2$ then $G$ is
SQ-universal or equal to $\Z\times\Z$. 

In the final section we show that free-by-cyclic groups formed using an
automorphism $\alpha$ of polynomial growth are not hyperbolic relative
to any collection of proper subgroups (thus SQ-universality cannot
be established for all free-by-cyclic groups using only the result
of \cite{amo}), but are acylindrically hyperbolic unless $\alpha$ has
finite order as an outer automorphism. 

In our final class of ascending HNN extensions of finitely
generated free groups, it was shown
for the word hyperbolic
case in \cite{hgws} that $F_k*_\theta$ acts properly and cocompactly
on a CAT(0) cube complex if $\theta$ is an irreducible endomorphism, 
although that still	 
leaves the case where $\theta$ is not irreducible but
$F_k*_\theta$ is word hyperbolic. As for a non word hyperbolic group
of the form $F_k*_\theta$, 
there are some results on largeness for these groups in 
\cite{def1} which are not quite exhaustive
but make it likely that largeness holds throughout. However the example
mentioned above of the injective
endomorphism $\theta(a)=a^m,
\theta(b)=b^n$ of the rank two free group $F(a,b)$ for $m$ and $n$
both having modulus greater than 1, shows that
$F_k*_\theta$ need not be
linear over any field. Moreover the possible
existence in the non word
hyperbolic case of Baumslag-Solitar subgroups $BS(1,m)$ where $|m|\neq 1$
again means that such a group need not have a proper cocompact action on
any CAT(0) space. We finish by
showing in Theorem \ref{sap}
that the work in \cite{hawsn} and the Agol - Wise machinery
answers Problem 17.108 in the
recent edition of the Kourovka notebook \cite{kouonl}, namely that
Sapir's example of a strictly ascending
HNN extension of $F_2$ is indeed linear.  
 
We would like to thank Martin Bridson and Martin Lustig for allowing us to 
reproduce the results from \cite{unpub} here.

\section{Square complexes for free-by-cyclic groups in 
the rank 2 case}

\subsection{Consequences}

As mentioned before there are many nice consequences of word hyperbolic groups acting properly and cocompactly on CAT(0) cube complexes. However, no automorphism of $F_2$ is hyperbolic since they all fix the conjugacy class of $[a,b]^{\pm1}$, therefore, Agol's theorem does not apply and the complexes constructed are not known to be virtually special, though, due to the work of various authors
the groups are virtually special. 

However, there are still advantages to having a group act on a CAT(0) cube complex. For instance, abelian subgroups are quasi-isometrically embedded, and such groups are biautomatic \cite{gerstenshort},\cite{nibloreeves} and have a deterministic solution to the word problem in quadratic time \cite{epstein_word_1992}. 

Groups which act on CAT(0) square complexes have the further nice property that all of their finitely presented subgroups also act properly and cocompactly on CAT(0) square complexes. This is proved using a tower argument (see \cite{bridson_metric_1999},p. 217) and the fact that a sub complex of a non-positively curved square complex is itself a non-positively curved square complex (this may fail in higher dimensions). The construction also shows that for $F_2$-by-$\Z$ groups their geometric dimension is equal to their CAT(0) dimension, namely 2.

\subsection{Preliminaries}

We assume that the reader is familiar with the basics of CAT(0) geometry for which the standard reference is \cite{bridson_metric_1999}.

\begin{defn}
We say a metric space is {\em non-positively curved} if for each point there 
is a neighbourhood which is \textnormal{CAT(0)}.
\end{defn}

In the following we will study 2 dimensional piecewise euclidean 
($\mathbb{PE}$) complexes. These 
are complexes built from polygonal subsets of $\R^2$ by gluing along edges,
whereupon 
we put the natural path metric on the resulting complexes. For full details see 
\cite{bridson_metric_1999}.

Square complexes are special examples of $\mathbb{PE}$ complexes where all 
the cells are squares. 

The following theorems of Gromov \cite{grm} allow one to check 
whether a complex is non-positively curved just by looking at the links of 
vertices. 

\begin{thm}\cite{bridson_thesis}
A $\mathbb{PE}$ complex with finitely many isometry types of cells is non 
positively curved if and only if the link of each vertex is a \textnormal{CAT(1)} space. 
\end{thm}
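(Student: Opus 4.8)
The plan is to reduce the global curvature statement to a purely local one at each point and then to recognise the local model as a Euclidean cone over the link. The two analytic inputs I would rely on are, first, the theorem (see \cite{bridson_metric_1999}) that a $\mathbb{PE}$ complex with only finitely many isometry types of cells is a complete geodesic space admitting a uniform lower bound $\varepsilon_0>0$ on the distance from any point to the closed cells not containing it; and second, Berestovskii's theorem that the Euclidean cone $C_0Y$ over a metric space $Y$ is \textnormal{CAT(0)} if and only if $Y$ is \textnormal{CAT(1)}. The bridge between them is the observation that, by construction of the piecewise Euclidean metric, the open ball of radius less than $\varepsilon_0$ about a point $x$ is isometric to the corresponding ball about the apex of the Euclidean cone $C_0(\mathrm{Lk}(x))$, where $\mathrm{Lk}(x)$ is the space of directions at $x$ --- itself a piecewise spherical complex assembled from the links of the cells containing $x$. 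Since being \textnormal{CAT(0)} is a local property, it follows that a neighbourhood of $x$ is \textnormal{CAT(0)} exactly when $\mathrm{Lk}(x)$ is \textnormal{CAT(1)}.

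The forward direction is then immediate: if $K$ is non-positively curved, each point $x$ has a \textnormal{CAT(0)} neighbourhood, so by the cone identification and Berestovskii's theorem $\mathrm{Lk}(x)$ is \textnormal{CAT(1)} for every $x$, in particular at every vertex.

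For the converse I would assume every vertex link is \textnormal{CAT(1)} and upgrade this to every point link being \textnormal{CAT(1)}, after which the local equivalence above finishes the argument. Here I would use the spherical join lemma: for a point $x$ lying in the interior of an open cell $\sigma$, the directions at $x$ split as the spherical join $\mathbb{S}^{\dim\sigma-1}*\mathrm{Lk}(\sigma,K)$ of the round sphere of directions tangent to $\sigma$ with the transverse link $\mathrm{Lk}(\sigma,K)$, and a spherical join is \textnormal{CAT(1)} if and only if each of its factors is. Since round spheres are \textnormal{CAT(1)}, this reduces the claim to showing each cell link $\mathrm{Lk}(\sigma,K)$ is \textnormal{CAT(1)}; but $\mathrm{Lk}(\sigma,K)$ is isometric to the link of a spherical simplex inside the vertex link $\mathrm{Lk}(v)$ for any vertex $v$ of $\sigma$, and links of simplices in a \textnormal{CAT(1)} piecewise spherical complex are again \textnormal{CAT(1)}. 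This last fact I would establish by downward induction on dimension, running the same cone argument in the spherical setting via the suspension version of Berestovskii's theorem (the spherical cone $C_1Y$ is \textnormal{CAT(1)} iff $Y$ is \textnormal{CAT(1)}).

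The main obstacle is genuinely the comparison-geometry core, namely Berestovskii's cone theorem and the join and suspension lemmas; once these are granted the rest is bookkeeping about the combinatorial structure of links. The one place where the finitely-many-isometry-types hypothesis is essential is in producing the uniform radius $\varepsilon_0$ and hence the honest isometry between a metric ball in $K$ and a ball in the cone over the link: without such a bound the complex could fail to be geodesic and the clean local cone picture on which the whole argument rests would break down.
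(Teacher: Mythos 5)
The paper offers no proof of this statement: it is Gromov's link condition, quoted directly from Bridson's thesis (see also Bridson--Haefliger, Theorem II.5.2), so there is no internal argument to compare against. Your proposal reconstructs exactly the standard proof from that cited source --- the isometry between small balls about a point and balls about the apex of the Euclidean cone over its link, Berestovskii's cone theorem, and the spherical join and suspension lemmas used to promote the hypothesis on vertex links to the links of arbitrary points --- and its structure is correct. One slip is worth fixing: with finitely many isometry types of cells, the distance $\varepsilon(x)$ from a point $x$ to the union of the closed cells not containing $x$ is positive for each $x$, but it is \emph{not} uniformly bounded below over all of $K$ (already for a line subdivided into unit intervals, $\varepsilon(x)\to 0$ as $x$ approaches a vertex); what finitely many shapes buys you is the pointwise positivity of $\varepsilon(x)$ together with completeness and geodesicity of $K$. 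Fortunately your argument only ever invokes the cone identification one point at a time, so the pointwise bound suffices and the overstatement is harmless.
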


In the two dimensional case the link of any vertex is a graph and so this can 
be reduced to the following. 

\begin{lem}\cite{bridson_metric_1999}
A graph is \textnormal{CAT(1)} if it contains no circuits of length less than $2\pi$.
\end{lem}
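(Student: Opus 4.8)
The plan is to deduce this from the local-to-global (Cartan--Hadamard) theorem for CAT(1) spaces, treating the graph as a one-dimensional $\mathbb{PE}$ complex equipped with its path metric. First I would record that such a graph is \emph{locally} CAT(1): around any interior point of an edge it is simply an interval, and around any vertex $v$ it is isometric to a neighbourhood of the cone point in the Euclidean cone on the discrete set of directions at $v$, which is a metric tree. Metric trees are CAT(0), hence CAT(1), so every point of the graph possesses a CAT(1) neighbourhood. For the finite links that arise in our application completeness is automatic, so the hypotheses of the local-to-global theorem are met.

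The second step is to invoke the version of the Cartan--Hadamard theorem valid for $\kappa=1$ (see \cite{bridson_metric_1999}, Part II): a complete, locally CAT(1) geodesic space is globally CAT(1) provided it contains no closed geodesic of length strictly less than $2\pi$. Thus the whole argument reduces to identifying the closed geodesics of the graph and bounding their lengths below by $2\pi$, which is exactly what the combinatorial circuit hypothesis is designed to deliver.

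The key translation, and the step where I would take the most care, is to match the metric notion of a closed geodesic with the combinatorial notion of a circuit. Because the graph is one-dimensional, the link of any vertex carries the angular metric in which two distinct directions sit at distance $\pi$; consequently a loop is a local geodesic exactly when it never backtracks at a vertex, the incoming and outgoing directions then being antipodal in the link. Hence every closed geodesic is an immersed circle, and a \emph{shortest} closed geodesic must in fact be embedded, that is, a circuit in the graph-theoretic sense, with length equal to the sum of its edge lengths. The assumption that there are no circuits of length less than $2\pi$ therefore rules out any closed geodesic of length less than $2\pi$, and the Cartan--Hadamard statement gives that the graph is CAT(1). I expect the main obstacle to lie purely in this dictionary between geometry and combinatorics: one must verify that ``no backtracking'' is precisely the local geodesic condition in dimension one and that minimal-length closed geodesics are simple, after which the substantive analytic content is supplied entirely by the cited local-to-global theorem.
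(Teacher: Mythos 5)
The paper offers no proof of this lemma: it is quoted directly from \cite{bridson_metric_1999}, so the only meaningful comparison is with the standard argument in that reference, which is more elementary than yours. There, one notes that a finite metric graph is a complete, locally compact length space, hence geodesic by Hopf--Rinow, and that for any geodesic triangle of perimeter less than $2\pi$ the union of its three sides is a compact subgraph of total length less than $2\pi$; such a subgraph can contain no circuit (a circuit inside it would have length at most the perimeter), so it is a tree, the triangle is a tripod, and tripods satisfy the CAT(1) comparison inequality. Your proposal takes a genuinely different route, through a local-to-global principle, and in outline it is sound; what it buys is conceptual clarity and a statement that generalizes beyond graphs, at the price of invoking a much heavier theorem and of having to translate between closed geodesics and circuits, which is exactly the point where the direct proof has nothing to check.

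Two points in your write-up need repair. First, the theorem you appeal to is not the Cartan--Hadamard theorem of \cite{bridson_metric_1999}, which is proved only for $\kappa\le 0$ (it must fail for positive $\kappa$ without extra hypotheses: the round projective plane is complete and locally CAT(1) but not CAT(1)). What you need is Bowditch's criterion: a \emph{complete, locally compact}, locally CAT(1) space with no closed geodesic of length less than $2\pi$ is CAT(1). Local compactness (or compactness) is genuinely used there, via a curve-shortening argument; it is automatic for the finite links arising in this paper, but it should be stated, and the result should be attributed correctly rather than cited as Cartan--Hadamard. Second, the step you yourself flag as the crux --- that a shortest closed geodesic is embedded --- is asserted rather than proved, and your formulation also presupposes that a shortest one exists. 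Both issues disappear if you argue instead: any closed geodesic $\gamma$ in a metric graph is a non-backtracking (cyclically reduced) closed edge path, hence represents a nontrivial conjugacy class in the free fundamental group of the graph, so its image is a subgraph which is not a tree and therefore contains a circuit $C$; since $\gamma$ crosses each edge of its image at least once, $C$ has length at most that of $\gamma$. Thus ``no circuit of length $<2\pi$'' yields ``no closed geodesic of length $<2\pi$'' directly, which is what Bowditch's criterion requires. If you insist on the minimality argument, you must additionally rule out a shortest closed geodesic splitting at a self-intersection into two loops each of which backtracks at the basepoint; this can be done by an infinite-descent argument, but it is a detail that has to be written, not deferred.
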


\begin{defn}
We say that an action is {\em proper} if for each compact set $K$ the set 
$\{g\in G:gK\cap K\neq \emptyset \}$ is finite. 
\end{defn}

As these groups will be the fundamental groups of non-positively curved 
spaces, they have an action on the universal cover. Since the spaces are 
compact the action will be proper and cocompact and it will also be a 
free action since these groups are torsion free. 

The groups $G_{\phi}$ that we shall be concerned with are mapping tori of $F_2 = F(x,y)$ by a single 
automorphism $\phi\in \mbox{Aut}(F_2)$. These groups have presentations of the form 
$$\langle x,y,t | txt^{-1}=\phi(x),tyt^{-1} = \phi(y)\rangle.$$

We start by considering the case of automorphisms which are of finite order. 

\begin{prop}
If $\phi\in \textnormal{Aut}(F_n)$ has order $q$ in $\textnormal{Out}(F_n)$
then $G_{\phi}$ is the fundamental group 
of a non-positively curved 2-complex. Furthermore, this is finitely covered by 
$\Gamma\times S^1$ where $\Gamma$ is a graph with fundamental group $F_n$.
\begin{proof}
Every finite order automorphism $\phi$ of $F_n$ can be 
realised as an isometry of a finite graph $\Gamma$; see for instance
\cite{cull} Theorem 2.1.
Let $X = \Gamma\times [0,1] /(x,0)\sim (\phi(x),1)$.

$X$ is locally isometric to $\Lambda\times(-\epsilon, \epsilon)$ where 
$\Lambda$ is a contractible subset of a graph. This will be CAT(0) and so 
$X$ is non-positively curved. 

If we take the cover corresponding to the obvious map to $\Z_q$ this will be 
$\Gamma \times S^1$.
\end{proof}
\end{prop}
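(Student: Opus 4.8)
The plan is to realize the outer automorphism class of $\phi$ geometrically and then take its mapping torus as a square complex, checking non-positive curvature through Gromov's link criterion. First I would invoke the realization theorem cited above (\cite{cull}, Theorem 2.1): since $\phi$ has order $q$ in $\text{Out}(F_n)$, its outer class is represented by a simplicial isometry $f\colon\Gamma\to\Gamma$ of a finite graph $\Gamma$ with $\pi_1(\Gamma)\cong F_n$, where $f$ has order exactly $q$ as an isometry, so that $f^q=\mathrm{id}_\Gamma$. Give every edge of $\Gamma$ unit length and form the mapping torus $X=\Gamma\times[0,1]/(x,0)\sim(f(x),1)$. Because $f$ permutes the unit edges of $\Gamma$, the product cells $e\times[0,1]$ are genuine Euclidean unit squares and the identification respects the cell structure, so $X$ is a square complex with finitely many isometry types of cells, putting us in the setting of the Bridson/Gromov link criterion stated above.

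Next I would identify $\pi_1(X)$ with $G_\phi$. The mapping torus of $f$ has $\pi_1(X)\cong F_n\rtimes_{f_*}\Z$, and $f_*$ lies in the same outer class as $\phi$. Since the isomorphism type of a free-by-cyclic group depends only on the outer class of the defining automorphism (an inner twist $\phi=\iota_w f_*$ is absorbed by replacing the stable letter $t$ by $tw$), we obtain $\pi_1(X)\cong F_n\rtimes_\phi\Z=G_\phi$, so $X$ realizes $G_\phi$ as the fundamental group of a $2$-complex.

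The heart of the argument, and the step I expect to require the most care, is verifying non-positive curvature; the essential point is that $f$ is an \emph{isometry} rather than merely a homotopy equivalence, so $X$ carries a local product structure. A neighbourhood of any point is isometric to $\Lambda\times(-\epsilon,\epsilon)$, where $\Lambda$ is a star neighbourhood in $\Gamma$; at the seam $\{0\}\sim\{1\}$ this uses that $f$ carries $\mathrm{star}(f^{-1}(v))$ isometrically onto $\mathrm{star}(v)$. Concretely, at a vertex of degree $d$ the link is the spherical join of $d$ points (the link of $v$ in $\Gamma$) with $S^0$ (the two interval directions), namely the complete bipartite metric graph $K_{d,2}$ with every edge of length $\pi/2$; its shortest embedded circuit is a $4$-cycle of length $2\pi$. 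By the stated lemma there are then no circuits of length less than $2\pi$, so every link is CAT(1) and $X$ is non-positively curved. Equivalently, $\Lambda\times(-\epsilon,\epsilon)$ is a product of a tree with an interval and hence CAT(0), which gives the same conclusion directly.

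Finally, for the covering statement I would pull back the connected $q$-fold cover $S^1\to S^1$ along the fibration $X\to S^1$ induced by the $[0,1]$-coordinate; this is precisely the cover associated to the composite $G_\phi\to\Z\to\Z_q$. The pulled-back space is the mapping torus of $f^q$, and since $f^q=\mathrm{id}_\Gamma$ this mapping torus is $\Gamma\times S^1$. Thus $X$ is finitely covered by $\Gamma\times S^1$ with $\pi_1(\Gamma)\cong F_n$, completing the proof.
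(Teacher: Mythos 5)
Your proof is correct and follows essentially the same route as the paper: realize the outer class of $\phi$ as a finite-order isometry of a graph via Culler's theorem, take the mapping torus, verify non-positive curvature through the local product structure (equivalently the $K_{d,2}$ links), and obtain the $\Gamma\times S^1$ cover from the map to $\Z_q$. The extra details you supply --- the explicit square-complex structure, the link computation, and the observation that the inner twist is absorbed into the stable letter so that $\pi_1(X)\cong G_\phi$ --- are all accurate refinements of the paper's terser argument.
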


We now want to look at automorphisms of infinite order. We require the 
following lemmas to ensure that we account for the general case with our 
construction. 

\begin{lem}
$G_{\phi}$ is defined up to isomorphism by $[\phi]\in \textnormal{Out}(F_2)$.
\end{lem}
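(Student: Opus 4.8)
The plan is to show that whenever $\psi\in\textnormal{Aut}(F_2)$ satisfies $[\psi]=[\phi]$ in $\textnormal{Out}(F_2)$, the mapping tori $G_\psi$ and $G_\phi$ are isomorphic; since $[\phi]$ ranges over an inner-automorphism coset, this is exactly the assertion that $G_\phi$ depends only on $[\phi]$. First I would record the elementary translation of the hypothesis: $[\psi]=[\phi]$ means $\psi\phi^{-1}\in\textnormal{Inn}(F_2)$, so there is $g\in F_2$ with $\psi=\iota_g\circ\phi$, where $\iota_g(w)=gwg^{-1}$. The whole argument then rests on a ``change of stable letter'' inside the semidirect product: replacing the time generator $t$ by $gt$ should convert $\phi$ into $\iota_g\circ\phi=\psi$.

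Concretely, write $G_\phi=\langle x,y,t\mid txt^{-1}=\phi(x),\ tyt^{-1}=\phi(y)\rangle$ and $G_\psi=\langle x,y,s\mid sxs^{-1}=\psi(x),\ sys^{-1}=\psi(y)\rangle$. I would use von Dyck's theorem to define a homomorphism $\Phi\colon G_\psi\to G_\phi$ by $x\mapsto x$, $y\mapsto y$, $s\mapsto gt$. To see this is well defined one checks the two defining relations are respected; for the generator $x$ this is the single computation $(gt)x(gt)^{-1}=g\,(txt^{-1})\,g^{-1}=g\phi(x)g^{-1}=(\iota_g\circ\phi)(x)=\psi(x)$, which matches the image of $\psi(x)$ viewed as a word in $x,y$, and identically for $y$. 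Symmetrically I would define $\Psi\colon G_\phi\to G_\psi$ by $x\mapsto x$, $y\mapsto y$, $t\mapsto g^{-1}s$, the point being that since $g\in F_2=\langle x,y\rangle$ the element $g^{-1}s$ is a legitimate word in the generators of $G_\psi$; the analogous one-line check $(g^{-1}s)x(g^{-1}s)^{-1}=g^{-1}\psi(x)g=\phi(x)$ shows it is well defined.

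Finally I would verify that $\Phi$ and $\Psi$ are mutually inverse by evaluating each composite on the generating sets: $\Psi\Phi(s)=\Psi(gt)=g\,g^{-1}s=s$ and $\Phi\Psi(t)=\Phi(g^{-1}s)=g^{-1}gt=t$, while both fix $x$ and $y$, so each composite is the identity on a generating set and hence the identity. This yields $G_\psi\cong G_\phi$ and completes the proof. The argument is entirely formal and I expect no serious obstacle; the only point requiring slight care is the bookkeeping of conventions (left versus right inner factor, and which of $t$, $gt$ is the stable letter of which group), together with the observation that $g\in\langle x,y\rangle$ is what makes the inverse map expressible in the given presentation.
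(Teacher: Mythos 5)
Your proof is correct and takes essentially the same approach as the paper: both rest on the change of stable letter $t\mapsto g^{\pm 1}t$, which the paper packages as a Tietze transformation (introducing $t'=g^{-1}t$ and eliminating $t$) while you spell it out as a pair of explicitly mutually inverse homomorphisms via von Dyck's theorem.
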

\begin{proof}
If $\phi = ad_g\psi$, for $ad_g\in \mbox{Inn}(F_2)$ the inner automorphism conjugation by $g$, then 
\begin{align*}
G_\phi &\cong \langle x,y,t | txt^{-1}=\phi(x),tyt^{-1} = \phi(y)\rangle\\
 &\cong \langle x,y,t | txt^{-1}=g\psi(x)g^{-1},tyt^{-1} = 
g\psi(y)g^{-1}\rangle\\
  &\cong \langle x,y,t, t' | t'xt'^{-1}=\psi(x),t'yt'^{-1} = \psi(y), 
t' = g^{-1}t\rangle\\
   &\cong \langle x,y,t' | t'xt'^{-1}=\psi(x),t'yt'^{-1} = 
\psi(y)\rangle\cong G_{\psi}.
\end{align*}
\end{proof}

\begin{lem}
$G_{\phi}$ is defined up to isomorphism by the conjugacy class of $[\phi]\in 
\textnormal{Out}(F_2)$.
\end{lem}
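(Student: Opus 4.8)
The plan is to reduce the statement to the previous lemma by a short direct construction. Suppose $[\psi]$ and $[\phi]$ are conjugate in $\textnormal{Out}(F_2)$, so that there is some $[\chi]\in\textnormal{Out}(F_2)$ with $[\psi]=[\chi][\phi][\chi]^{-1}$. First I would lift $[\chi]$ to an honest automorphism $\chi\in\textnormal{Aut}(F_2)$. Then $\psi$ and $\chi\phi\chi^{-1}$ have the same image in $\textnormal{Out}(F_2)$, i.e.\ they differ by an inner automorphism, so the previous lemma already gives $G_\psi\cong G_{\chi\phi\chi^{-1}}$. Thus it suffices to prove the purely $\textnormal{Aut}(F_2)$-level statement that $G_\phi\cong G_{\chi\phi\chi^{-1}}$ for any $\chi\in\textnormal{Aut}(F_2)$.

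For this I would write $\psi'=\chi\phi\chi^{-1}$ and present $G_\phi=\langle x,y,t\mid twt^{-1}=\phi(w),\ w\in\{x,y\}\rangle$ together with $G_{\psi'}$ using a fresh stable letter $t'$. I would then define a map $G_\phi\to G_{\psi'}$ by $t\mapsto t'$ and $w\mapsto\chi(w)$ for $w\in F_2$, and check it respects the defining relations. Using that conjugation by $t'$ realises $\psi'$ on all of $F_2$, the image of $twt^{-1}$ is
\[
t'\chi(w)t'^{-1}=\psi'(\chi(w))=\chi\phi\chi^{-1}\chi(w)=\chi(\phi(w)),
\]
which is exactly the image of $\phi(w)$; so the relations are preserved and the map is a well-defined homomorphism. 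Constructing the evident inverse $t'\mapsto t$, $v\mapsto\chi^{-1}(v)$ in the same way, and using that $\chi$ is an automorphism of $F_2$, shows the two maps are mutually inverse, so $G_\phi\cong G_{\psi'}$. Chaining the isomorphisms then yields $G_\phi\cong G_{\chi\phi\chi^{-1}}\cong G_\psi$.

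I do not expect a serious obstacle here: the whole content is the observation that an automorphism $\chi$ of the free fibre transports one mapping-torus presentation to the other while fixing the stable letter. The only point needing care is the bookkeeping between $\textnormal{Aut}(F_2)$ and $\textnormal{Out}(F_2)$. Since conjugacy is stated at the level of outer automorphisms, the argument must first pass to an actual automorphism $\chi$ and then absorb the residual inner-automorphism ambiguity through the previous lemma, rather than trying to work with outer automorphisms directly inside a group presentation.
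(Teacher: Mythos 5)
Your proposal is correct and is essentially the paper's own argument: the core isomorphism in both is the map fixing the stable letter and sending $w\mapsto\chi(w)$ (the paper's $\Omega\colon G_\psi\to G_\phi$, $g\mapsto\xi(g)$, $t\mapsto t$). Your explicit handling of the $\textnormal{Aut}/\textnormal{Out}$ bookkeeping via the previous lemma, and the verification of relations and of the inverse map, merely spell out details the paper leaves implicit.
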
 
\begin{proof}
Let $\psi = \xi^{-1}\phi\xi$ then the following map defines an isomorphism.
\begin{align*}
\Omega:G_{\psi}&\to G_{\phi}\\
g&\mapsto \xi(g)\\
t&\mapsto t.
\end{align*}
\end{proof}

As such we will restrict to conjugacy in $\mbox{Out}(F_2) = \mbox{GL}_2(\Z)$.

In what follows we will need the following matrices:
$$F = \left(\begin{matrix}
  0 & 1 \\
  1 & 0 
 \end{matrix}\right),
 R = \left(\begin{matrix}
  1 & 1 \\
  0 & 1 
 \end{matrix}\right), L = \left(\begin{matrix}
  1 & 0 \\
  1 & 1 
 \end{matrix}\right).$$

\begin{lem}
Let $ \left(\begin{matrix}
  a & b \\
  c & d 
 \end{matrix}\right)=g\in \mbox{GL}_2(\Z)$ be a matrix. Then at least one of $g,-g,Fg$ or $-Fg$ is 
conjugate to a matrix with all non-negative coefficients. 
\end{lem}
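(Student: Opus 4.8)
The plan is to translate the conclusion into a statement about invariant cones and then to produce the required cone from the dynamics of the candidate matrix on the projective line. Conjugating a matrix $h$ by $P\in\mathrm{GL}_2(\mathbb Z)$ into an entrywise non-negative matrix is the same as finding a $\mathbb Z$-basis $v_1,v_2$ of $\mathbb Z^2$ (the columns of $P^{-1}$) for which the simplicial cone $C=\mathbb R_{\ge0}v_1+\mathbb R_{\ge0}v_2$ satisfies $h(C)\subseteq C$: the matrix of $h$ in the basis $(v_1,v_2)$ is non-negative precisely when $hv_1,hv_2\in C$. So I must exhibit, for at least one $h\in\{g,-g,Fg,-Fg\}$, an invariant \emph{unimodular} cone. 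I first record which determinants occur: since $\det(-g)=\det g$ and $\det(Fg)=\det(-Fg)=-\det g$, two of the four matrices have determinant $+1$ and two have determinant $-1$, and within each determinant the two are negatives of one another. Thus I am free to fix the determinant and then to replace the chosen matrix by its negative.

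First I would dispose of the generic case, in which the relevant candidate $h$ has a strictly dominant positive real eigenvalue $\lambda>1$; this always occurs, for $g$ of infinite order, after the reductions above. If $\det g=-1$ the characteristic polynomial $x^2-tx-1$ has discriminant $t^2+4>0$, so the eigenvalues are real with $\lambda_+>0>\lambda_-$ and $\lambda_+|\lambda_-|=1$, and replacing $h$ by $-h$ I may take the positive eigenvalue $\lambda=\lambda_+>1$; if $\det g=1$ and $|t|>2$ the eigenvalues are real of equal sign, and choosing $g$ or $-g$ makes the trace exceed $2$, giving $\lambda>1>1/\lambda>0$. Here $\lambda$ is a quadratic irrational and its eigendirection $u$ has irrational slope; on $\mathbb{RP}^1$ the action of $h$ fixes $[u]$ attractingly and the other eigendirection $[w]$ repellingly, and because $\lambda>0$ the ray $\mathbb R_{\ge0}u$ is preserved. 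Taking the continued-fraction convergents $p_k/q_k$ of the slope of $u$, the lattice vectors $(q_k,p_k)$ satisfy $\det\!\big((q_k,p_k),(q_{k+1},p_{k+1})\big)=\pm1$, approach the direction of $u$, and lie on alternating sides of the line $\mathbb R u$. Hence for large $k$ the cone $C_k=\mathrm{cone}\big((q_k,p_k),(q_{k+1},p_{k+1})\big)$ is unimodular, contains $\mathbb R_{\ge0}u$ in its interior, and lies in a small neighbourhood of $\mathbb R_{\ge0}u$ inside the attracting basin and missing $[w]$, where $h$ strictly contracts toward $[u]$; therefore $h(C_k)\subseteq C_k$. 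In the basis $\big((q_k,p_k),(q_{k+1},p_{k+1})\big)$ the matrix of $h$ is then non-negative, in fact a positive word in $R$ and $L$ when $\det h=1$, and such a word times $F$ when $\det h=-1$.

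The one remaining infinite-order case is parabolic: $\det g=1$ and $t=\pm2$. Choosing $g$ or $-g$ I may assume $t=2$, so $h$ is conjugate over $\mathbb Z$ to $\left(\begin{smallmatrix}1&n\\0&1\end{smallmatrix}\right)$ for some $n\neq0$; if $n<0$ I conjugate further by $\mathrm{diag}(1,-1)\in\mathrm{GL}_2(\mathbb Z)$ to replace $n$ by $-n$, obtaining a non-negative matrix.

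The main obstacle is the elliptic case, where all eigenvalues lie on the unit circle, no candidate has a dominant positive eigenvalue, and the cone construction breaks down. In fact the statement is genuinely false there, so the hypothesis that $g$ have infinite order is essential: for $g=\left(\begin{smallmatrix}0&-1\\1&0\end{smallmatrix}\right)$ the four associates are $\pm g$ (order $4$, determinant $1$) and $\pm\,\mathrm{diag}(1,-1)$ (order $2$, determinant $-1$); the only non-negative matrices of finite order in $\mathrm{GL}_2(\mathbb Z)$ are $I$ and $F$ (a non-negative, finite-order matrix has Perron eigenvalue $1$, which forces one of these), while $\mathrm{diag}(1,-1)$ is not conjugate to $F$ since any conjugating matrix would have determinant $\pm2$. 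Hence none of the four associates is conjugate to a non-negative matrix. This is precisely why I would state and prove the lemma for $g$ of infinite order, the finite-order automorphisms already having been treated by the preceding Proposition, and then invoke the hyperbolic, determinant $-1$, and parabolic subcases above.
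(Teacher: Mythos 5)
Your two structural observations are correct and valuable. First, the lemma as printed is indeed false for finite-order matrices: your counterexample $g=\left(\begin{smallmatrix}0&-1\\1&0\end{smallmatrix}\right)$ is right, and it lands exactly on the spot where the paper's own proof of the triangular case says of the order-two matrices ``we do not worry about this case''; restricting to infinite order is precisely what the subsequent Corollary uses. Second, your method --- recasting conjugacy to a non-negative matrix as the existence of an invariant unimodular cone, produced from convergents of the expanding eigendirection --- is genuinely different from the paper's, which is purely arithmetic: it arranges signs by conjugating by $\mathrm{diag}(-1,1)$, uses the determinant to rule out exactly one negative entry, and finishes with a single conjugation by $R^{-1}$, in the spirit of the Cohen--Metzler--Zimmermann reduction quoted immediately afterwards. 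Your version even yields slightly more than the statement: for infinite-order $g$ one of $\pm g$ itself is conjugate to a non-negative matrix, so the factor $F$ is only ever needed for finite-order $g$.

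There is, however, a genuine gap in your generic case when the relevant candidate $h$ has determinant $-1$ --- a case you cannot dodge, since for example $g=\left(\begin{smallmatrix}1&-1\\1&-2\end{smallmatrix}\right)$ has determinant $-1$ and infinite order while $Fg$ is elliptic of order $4$, so neither determinant $+1$ candidate is conjugate to a non-negative matrix. When $\det h=-1$ the second eigenvalue is $\mu=-1/\lambda<0$, so $h$ \emph{swaps} the two sides of $[u]$ in $\mathbb{RP}^1$: in eigencoordinates, writing nearby directions as $[u+cw]$, it acts by $c\mapsto -c/\lambda^{2}$. Contraction towards a fixed point does not imply that an interval around it maps into itself when the two sides are exchanged (the map $c\mapsto -c/2$ does not preserve $[-\tfrac{1}{10},\,1]$). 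What invariance of the cone $C_k$, viewed as an interval $[\beta_-,\beta_+]\ni 0$, actually requires is the asymmetry bound $\lambda^{-2}\le |\beta_-|/\beta_+\le\lambda^{2}$, and this is not free: for $h=\left(\begin{smallmatrix}1&1\\1&0\end{smallmatrix}\right)$ one has $hv_k=v_{k+1}$ exactly, hence $|\beta(v_k)|/|\beta(v_{k+1})|=\lambda^{2}$ exactly, the image cone $h(C_k)=C_{k+1}$ shares a boundary ray with $C_k$, and nothing is strict --- any soft estimate that loses even a constant factor fails to prove the inclusion. The conclusion $h(C_k)\subseteq C_k$ for large $k$ is true, but the honest reason is the classical automorph fact that $h$ eventually shifts the convergent sequence ($hv_k=v_{k+m}$ for a fixed $m$ and all large $k$), or an equivalent nestedness statement for thin unimodular cones containing the ray $\R_{\ge 0}u$; either needs a real argument, and your proof supplies neither. (Your determinant $+1$ cases are sound: there both eigenvalues are positive after the sign choice, the two sides of $[u]$ are preserved, and contraction does give invariance of any such cone; the parabolic case is also fine.)
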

\begin{proof}
We will split into 2 cases. First we will deal with the case of matrices with 
no entry equal to 0. Conjugating and multiplying by $F$, we may assume that 
$|a|\geq|b|,|c|,|d|$. We may now replace $g$ by $-g$ to make $a>0$. 
 
We note that every matrix of this form in $\mbox{GL}_2(\Z)$ cannot have one negative 
entry, as if this is the case then we see $|ad-bc|\neq1$.

We note that conjugation by $ \left(\begin{matrix}
  -1 & 0 \\
  0 & 1 
 \end{matrix}\right)$ changes the signs of $c$ and $b$. Using this we can 
assume that $a,c>0$, then we know that $b,d<0$ or $b,d>0$. If we are in the 
second case we are done, so we assume we are in the first case. 

We now conjugate by $R^{-1}$:
$$
R^{-1}gR = \left(\begin{matrix}  
1 & -1 \\  
0 & 1 
\end{matrix}\right)
\left(\begin{matrix}  
a & b \\  
c & d  
\end{matrix}\right)
\left(\begin{matrix}  
1 & 1 \\  
0 & 1  
\end{matrix}\right)
 = 
 \left(\begin{matrix}  
 1 & -1 \\  
 0 & 1 
 \end{matrix}\right)
 \left(\begin{matrix}  
 a & a+b \\  
 c & c+d  
 \end{matrix}\right).
$$
We know that $a,c, a+b$ are $>0$ so we see that $c+d\geq0$. Now 
$$
 \left(\begin{matrix}  
 1 & -1 \\  
 0 & 1 
 \end{matrix}\right)
 \left(\begin{matrix}  
 a & a+b \\  
 c & c+d  
 \end{matrix}\right)
  =
  \left(\begin{matrix}  
  a-c & a+b-(c+d) \\  
  c & c+d \end{matrix}\right)
 $$
so once again we know that $c,a-c,c+d$ are non-negative so $a+b-(c+d)\geq0$. 
 
If $g$ had an entry equal to 0 then either $g$ or $Fg$ is triangular. Further 
conjugating by $F$ and multiplying by $-1$ we can assume that $c=0$ and $a=1$. 
We now have two cases; 
namely $d=\pm1$. In the case where $d=1$ we can conjugate by 
$ \left(\begin{matrix}
  -1 & 0 \\
  0 & 1 
 \end{matrix}\right)$ to make $b\geq0$. If $d=-1$ then this matrix has order 2 
and we do not worry about this case.  
\end{proof}

The following lemmas are from \cite{cohen_what_1981}.

\begin{lem}
If $g$ is a non-diagonal matrix with all entries non-negative, then there is a 
subtraction of one row from another, which reduces the sum of the entries and 
produces a non-negative result.
\end{lem}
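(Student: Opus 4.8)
The plan is to read $g$ as an element of $\mathrm{GL}_2(\Z)$ (as in the surrounding discussion), with non-negative integer rows $(a,b)$ and $(c,d)$. The condition $\det g = ad-bc = \pm 1$ is essential and I would flag it at the outset: for a general non-negative integer matrix the conclusion simply fails, for instance with $\left(\begin{smallmatrix}1&2\\2&1\end{smallmatrix}\right)$, where neither row subtraction stays non-negative. There are only two candidate moves, so I would first record what each does. Subtracting $(c,d)$ from $(a,b)$ gives a non-negative result exactly when $a\ge c$ and $b\ge d$, and then the entry sum drops by $c+d$; symmetrically, subtracting $(a,b)$ from $(c,d)$ works exactly when $c\ge a$ and $d\ge b$, dropping the sum by $a+b$. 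Hence it suffices to show that one row \emph{dominates} the other entrywise, because the subtracted row can never be zero (a zero row forces $\det g = 0$), so the drop $c+d$ (respectively $a+b$) is strictly positive and the reduction is genuine.

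The heart of the argument is therefore the dichotomy: for such a $g$, either $a\ge c$ and $b\ge d$, or $c\ge a$ and $d\ge b$. I would prove this by contradiction. If neither holds, the entries must ``cross,'' meaning either $a>c$ with $d>b$, or $c>a$ with $b>d$. In the first case $a\ge c+1$ and $d\ge b+1$ give $ad\ge(c+1)(b+1)=bc+b+c+1$, so $ad-bc\ge b+c+1\ge 1$; since $ad-bc=\pm1$, this forces $b=c=0$, making $g$ diagonal and contradicting the hypothesis. The second crossing case is handled identically with the roles of $ad$ and $bc$ swapped: $c\ge a+1$ and $b\ge d+1$ give $bc\ge ad+a+d+1$, hence $bc-ad\ge a+d+1\ge 1$, and $\det g=\pm1$ forces $a=d=0$ and $bc=1$.

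The second crossing case does not vanish entirely: it leaves the single possibility $a=d=0$, $b=c=1$, that is $g=F$, for which indeed neither subtraction preserves non-negativity. I would dispose of this exactly as the paper does elsewhere for order-two matrices: $F$ has order $2$ and is realised by a graph isometry, so it is covered by the finite-order Proposition and may be set aside here. Granting $g\ne F$, the dichotomy holds, and subtracting the dominated row from the dominating one is the required operation. The main obstacle is precisely this dichotomy together with noticing the $F$ exception; the real content is that the determinant constraint $\pm1$ is exactly strong enough to forbid the crossing entry patterns for every non-diagonal non-negative matrix except $F$.
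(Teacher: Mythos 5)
Your proof is correct, and it necessarily takes a different route from the paper, because the paper does not prove this lemma at all: it is quoted, together with the lemma following it, from Cohen--Metzler--Zimmermann \cite{cohen_what_1981}. Your argument supplies the missing content. Reading $g\in\mathrm{GL}_2(\Z)$ is the right interpretation (and you are right to flag that this hypothesis, implicit in the surrounding text, is essential --- your example $\left(\begin{smallmatrix}1&2\\2&1\end{smallmatrix}\right)$ kills the statement for general non-negative integer matrices). The determinant computation correctly establishes the dichotomy: the two crossing patterns force $ad-bc\ge b+c+1$ or $bc-ad\ge a+d+1$, so $\det g=\pm1$ leaves only diagonal matrices and $g=F$; otherwise one row dominates the other entrywise, and the subtraction is genuine since no row of an invertible matrix is zero. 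Moreover, you have caught a real, if harmless, imprecision in the statement as written: $F=\left(\begin{smallmatrix}0&1\\1&0\end{smallmatrix}\right)$ is non-diagonal with non-negative entries and admits no non-negativity-preserving row subtraction, so the lemma literally fails for $g=F$. That exception is invisible in the paper only because of how the lemma is used: the next lemma explicitly allows the reduction to terminate at $I$ \emph{or} $F$, the subsequent corollary puts $F$ itself into the generating set of the semigroup, and order-two matrices such as $F$ are in any case absorbed by the finite-order Proposition, exactly as you observe. In short, the paper's citation buys brevity; your approach buys a self-contained elementary proof together with the corrected hypothesis ($g\ne F$), and nothing in it needs repair.
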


\begin{lem}
Let $g\in \mbox{GL}_2(\Z)$ with non-negative entries. Then there is a unique sequence of row subtractions which keep $g$ 
non-negative and reduce it to $I$ or $F$.
\end{lem}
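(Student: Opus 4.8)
The plan is to run the greedy reduction algorithm and show that on non-negative matrices in $\mathrm{GL}_2(\Z)$ it is \emph{forced} at every step, which yields existence and uniqueness simultaneously. First I would record two elementary facts. Subtracting one row of $g$ from another is an elementary operation of determinant $1$, so it leaves the value $\det g \in \{1,-1\}$ unchanged and keeps us inside $\mathrm{GL}_2(\Z)$, while the entry-sum $S(g) = a+b+c+d$ is a non-negative integer. Writing $g = \left(\begin{smallmatrix} a & b \\ c & d \end{smallmatrix}\right)$, there are exactly two candidate subtractions: replacing the first row by (first row $-$ second row), which stays non-negative precisely when $a \geq c$ and $b \geq d$, and replacing the second row by (second row $-$ first row), which stays non-negative precisely when $c \geq a$ and $d \geq b$.

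The crucial point I would then establish is that these two non-negativity conditions are mutually exclusive: were both to hold we would get $a = c$ and $b = d$, making the two rows of $g$ identical and forcing $\det g = 0$, contrary to $g \in \mathrm{GL}_2(\Z)$. Hence at most one subtraction preserves non-negativity, so the reduction is deterministic and the reducing sequence, if it exists, is unique. I would also observe that every admissible subtraction strictly decreases $S$: the row being subtracted is nonzero (a zero row would again force $\det g = 0$), so it has positive entry-sum and $S$ drops. Since $S \in \Z_{\geq 0}$, the process must terminate.

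To finish I would identify the terminal matrices, namely those admitting no admissible subtraction. If $g$ is diagonal then $ad = \pm 1$ with $a,d \geq 0$ forces $a = d = 1$, so $g = I$. If $g$ is non-diagonal then either $g = F$, which one checks directly admits neither subtraction, or the preceding lemma supplies an admissible subtraction and $g$ is non-terminal. Thus $I$ and $F$ are the only terminal matrices. Combining everything: starting from any non-negative $g \in \mathrm{GL}_2(\Z)$, at each stage we are either already at $I$ or $F$ and stop, or there is a \emph{unique} admissible move we are forced to make; since $S$ strictly decreases we reach $I$ or $F$ after finitely many steps, which gives both existence and uniqueness of the sequence.

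The main obstacle is the uniqueness assertion, and the whole argument turns on the determinant hypothesis: it is exactly $\det g = \pm 1 \neq 0$ that prevents the two rows from being entrywise comparable in both directions and so forbids two competing admissible moves. As a consistency check, since row subtraction preserves the value of the determinant, a matrix with $\det g = 1$ must reduce to $I$ and one with $\det g = -1$ to $F$.
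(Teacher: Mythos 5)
Your proposal is correct, but there is nothing in the paper to compare it against: this lemma and the one preceding it are stated as quoted results from \cite{cohen_what_1981} (``The following lemmas are from...'') and the paper gives no proof of either. Judged on its own terms, your argument is sound and is the standard one: the observation that both candidate subtractions being admissible would force the two rows of $g$ to coincide, hence $\det g=0$, is exactly what makes the greedy reduction deterministic; combined with the strict decrease of the entry sum (the subtracted row is non-zero and non-negative) and the identification of the terminal matrices as precisely $I$ and $F$, this yields existence and uniqueness of the reducing sequence simultaneously.

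Two small points deserve attention. First, you invoke the preceding lemma to supply an admissible move at every non-diagonal matrix other than $F$; but that lemma, as stated in the paper, applies verbatim to $F$ as well, where it is false ($F$ is non-diagonal and non-negative, yet both subtractions produce a negative entry, as you yourself check). So that lemma must implicitly exclude $F$, and your proof silently adopts this reading. To make your argument self-contained, replace the appeal to it by a short computation: if $g=\left(\begin{smallmatrix} a & b \\ c & d\end{smallmatrix}\right)\in\mathrm{GL}_2(\Z)$ is non-negative and admits no admissible subtraction, then either ($a<c$ and $d<b$) or ($c<a$ and $b<d$). In the first case $ad-bc\leq (c-1)(b-1)-bc=1-b-c$ together with $ad-bc\geq -1$ gives $b+c\leq 2$, and since $b,c\geq 1$ here we get $b=c=1$ and then $a=d=0$, i.e.\ $g=F$; in the second case $ad-bc\geq (c+1)(b+1)-bc=b+c+1$ together with $ad-bc\leq 1$ forces $b=c=0$, so $g$ is diagonal and hence $g=I$ by your diagonal analysis. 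Second, for uniqueness you should also record the (trivial) fact that $I$ is terminal, since otherwise a valid sequence could in principle pass through $I$ and continue; you verify this only for $F$. With these two additions your proof is complete and independent of the unproved quoted lemma.
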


We can think of these operations as multiplication by $R$ or $L$ which gives 
us the following result. 

\begin{co}
The semigroup generated by $-I,F,L$ and $R$ contains a conjugate of every 
infinite order matrix in $\textnormal{GL}_2(\Z)$.
\end{co}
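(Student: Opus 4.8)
The plan is to fuse the three preceding lemmas into a single reduction. First I would take an arbitrary infinite order matrix $M\in\mathrm{GL}_2(\Z)$ and apply the conjugacy lemma to obtain a matrix $g$ with all entries non-negative that is conjugate to one of $M,-M,FM,-FM$. The corollary then splits into two tasks: showing that such a non-negative $g$ automatically lies in the semigroup, and showing that one can pass from $g$ back to a conjugate of $M$ without leaving the semigroup.

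For the first task I would feed $g$ into the two lemmas of \cite{cohen_what_1981}. The second of these reduces $g$ to $I$ or $F$ by a unique chain of row subtractions that preserve non-negativity, and the key observation is that subtracting one row from the other is precisely left multiplication by $R^{-1}$ or by $L^{-1}$. Reading the chain backwards therefore expresses $g$ as a positive word in $L$ and $R$ followed by $I$ or $F$; in particular $g\in\langle F,L,R\rangle$ and hence $g\in\langle -I,F,L,R\rangle$. This is the step I expect to be routine once the identification of row operations with multiplication by $L$ and $R$ is made explicit.

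For the second task I would exploit that $-I$ is central and lies in the semigroup, so that realising a conjugate of $-M$ is as useful as realising one of $M$: if $g$ is conjugate to $M$ then $g$ is already the desired conjugate, while if $g$ is conjugate to $-M$ then $(-I)g$ is. Absorbing the sign in this way, I may assume $\mathrm{tr}(M)\ge 0$.

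The main obstacle is the remaining two cases, where the conjugacy lemma only provides a non-negative conjugate of $FM$ or of $-FM$ rather than of $M$ or $-M$; here one cannot simply multiply the semigroup element $g$ by $F$, since left multiplication by $F$ does not commute with conjugation and in fact alters the trace, so $Fg$ need not be conjugate to $M$. The way I would close this gap is to observe that left multiplication by $F$ merely interchanges the two rows and therefore preserves non-negativity. Running the conjugacy lemma's own normalisation while tracking this fact shows that, for an infinite order input, a conjugate of $M$ or of $-M$ (and not merely of $FM$ or $-FM$) can always be taken non-negative: in the determinant $-1$ subcase the natural non-negative representative is a positive word in $L,R$ followed by $F$, which is still a non-negative matrix and still lies in $\langle F,L,R\rangle$. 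With the $F$-cases thus eliminated, the first two tasks place this non-negative conjugate inside $\langle F,L,R\rangle$, and multiplication by $-I$ where necessary yields the required conjugate of $M$ in $\langle -I,F,L,R\rangle$.
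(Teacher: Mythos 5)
Your reduction is the paper's own: the published proof consists of precisely your first two steps (the conjugacy lemma, then the lemmas of \cite{cohen_what_1981} with row subtraction read as left multiplication by $L^{-1}$ or $R^{-1}$, signs absorbed by the central generator $-I$), and that part of your write-up is correct. You have also correctly isolated the point this glosses over: the conjugacy lemma produces a non-negative conjugate of \emph{one of} $M$, $-M$, $FM$, $-FM$, whereas the corollary needs a conjugate of $M$ itself, and left multiplication by $F$ does not commute with conjugation.

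The gap is that your repair of this point is an assertion, not an argument, and the observation it rests on does not touch the difficulty. That left multiplication by $F$ preserves non-negativity concerns a matrix which is \emph{already} non-negative; in the conjugacy lemma, $F$ is applied at the outset, to a matrix that is not yet non-negative, in order to bring a dominant off-diagonal entry to the $(1,1)$ slot, and the normalisation may afterwards still require the conjugation by $R$. Since $RFR^{-1}=\left(\begin{smallmatrix}1&0\\1&-1\end{smallmatrix}\right)\neq\pm F$, the $F$ cannot then be commuted back out: in this case the lemma only yields $k(\pm FM)k^{-1}=g\ge 0$, so every conjugate of $M$ has the shape $\pm\,(uFu^{-1})\cdot\bigl(uk^{-1}gku^{-1}\bigr)$, which is not visibly in the semigroup. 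Nor is the problem confined to determinant $-1$, as your proposal suggests: $M=\left(\begin{smallmatrix}-1&5\\-1&4\end{smallmatrix}\right)$ has determinant $1$ and trace $3$ (so infinite order), its dominant entry is off-diagonal, and the lemma's procedure is forced through $MF=\left(\begin{smallmatrix}5&-1\\4&-1\end{smallmatrix}\right)$ and then the $R$-conjugation, ending at $R^{-1}(MF)R=\left(\begin{smallmatrix}1&1\\4&3\end{smallmatrix}\right)=L^3RF$, a non-negative conjugate of $FM$ rather than of $M$; in fact $M$ is conjugate to $LR$, but neither your argument (nor, to be fair, the paper's one-line proof) produces this. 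Your sentence ``running the conjugacy lemma's own normalisation while tracking this fact shows that a conjugate of $M$ or of $-M$ can always be taken non-negative'' is exactly the statement that requires proof. Note where your idea does work: if the conjugations needed after the $F$-move all normalise $\{\pm F\}$ (conjugation by $\mathrm{diag}(-1,1)$ and by $F$ do, carrying $F$ to $-F$ and $F$ respectively), then $kFk^{-1}=\pm F$ lets one peel off the $F$ and conclude $kMk^{-1}=\pm Fg$ lies in the semigroup. The genuinely open case is when the $R$-step intervenes, and closing it needs a separate argument --- in effect the classical reduction/continued-fraction theory of conjugacy classes in $\mathrm{GL}_2(\Z)$, under which the corollary is indeed true.
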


As such we will only need to realise the automorphisms corresponding to these 
in our groups. 

\subsection{The construction}\label{sec2.3}

We will now construct non-positively curved complexes with $G_{\phi}$ as their 
fundamental groups, when $\phi$ is in the semigroup generated by
\begin{align*}
\lambda:a&\mapsto ba &\rho:a&\mapsto a\\
b&\mapsto b &b&\mapsto ab\\
\iota:a&\mapsto a^{-1} &\sigma:a&\mapsto b\\
b&\mapsto b^{-1} &b&\mapsto a.
\end{align*}

We see that from the above this gives us all $F_2$-by-$\Z$ groups. 
 
\begin{figure}
\center
\def\svgwidth{100mm}
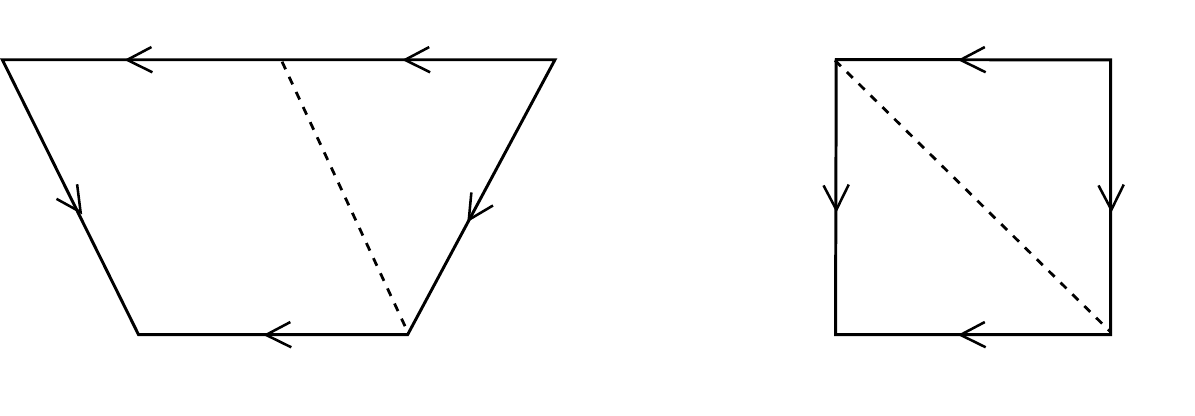
\caption{The 2-complex associated to $\lambda$}
\label{2complex}
\end{figure}

We start with the obvious 2-complex shown in Figure \ref{2complex} for the 
automorphism $\lambda$. This has a repeated corner which means it cannot 
support a metric of non-positive curvature.

\begin{figure}
\center
\def\svgwidth{118mm}
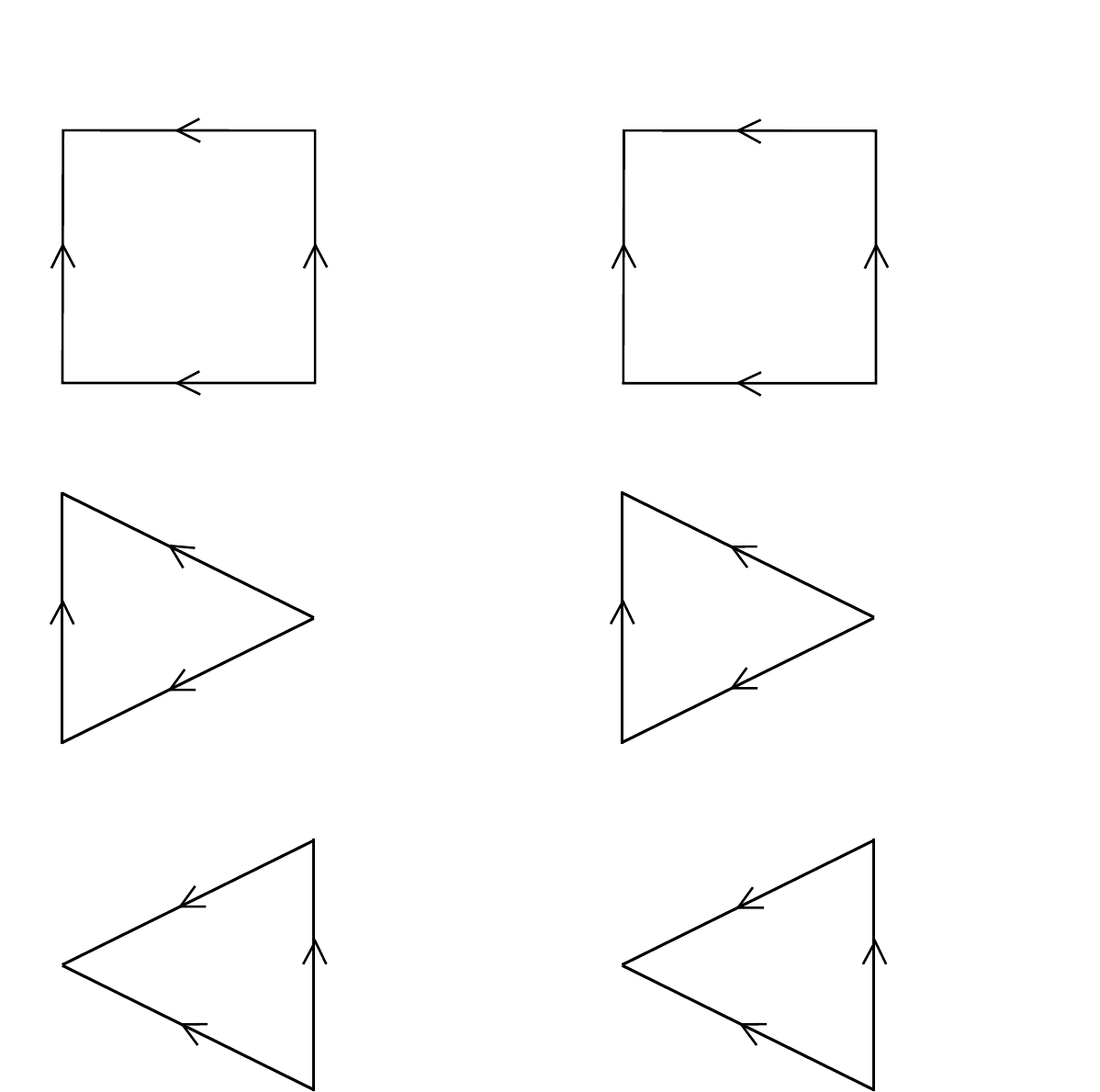
\caption{The basic building blocks for the construction with angles labelled}
\label{buildingblocks}
\end{figure}

To get rid of the repeated corner we cut our building blocks along the dotted 
line identifying the triangles with the repeated corner, resulting in our basic 
building blocks shown in Figure \ref{buildingblocks}.

By the above lemmas we can without loss of generality assume that our 
automorphism has the form $\phi = \eta_0\dots\eta_{n-1}\theta$ where $\eta_i = 
\rho$ or $\lambda$ and $\theta$ is one of the following finite order 
automorphisms:
\begin{align*}
\psi_1:(a,b)&\mapsto (a,b)\\
\psi_2:(a,b)&\mapsto (a^{-1},b^{-1})\\
\psi_3:(a,b)&\mapsto (b,a)\\
\psi_4:(a,b)&\mapsto (b^{-1},a^{-1}).
\end{align*}
We can assume that we only apply one of these and we do it at the end. This is 
because in $\mbox{Out}(F_2)$ the first and second give central elements 
whereas $\psi_4$ is equal to
the composition $\psi_2\psi_3$, but $\rho\psi_3 = \psi_3\lambda$.

We can now glue these together to get an automorphism which is in the class 
defined previously, so up to isomorphism we have all groups $G_{\phi}$. 
We do this in the following way: in Figure 2 let $i=0$ so that
we have the positive and the negative vertices $t_0^{\pm 1}$ at each end of
$t_0$. On performing the given gluing we have that $t_0^{\pm 1}$ are
not identified, but if we further stick these two vertices
together by identifying $a_0,b_0$ with $a_1,b_1$
respectively then the resulting
2-complex has fundamental group $F_2\rtimes_{\lambda}\Z$ or 
$F_2\rtimes_{\rho}\Z$. Now suppose that our automorphism
$\phi = \eta_0\dots\eta_{n-1}\theta$, 
where $\theta$ is one of the four special finite order
automorphisms above.
For each $i$ between 0 and $n-1$ we have a copy $C_i$ of the 2-complex
associated to either $\lambda$ or $\rho$ in Figure 2 which contains
the edge $t_i$. We then glue $C_i$ to $C_{i+1}$
 by the identity between the $a_{i+1},b_{i+1}$, which means that
the vertex $t_i^+$ is identified with $t_{i+1}^-$. Finally we glue
$C_{n-1}$ back to $C_0$ by identifying the $a_n,b_n$ with
$a_0,b_0$ so that $t_{n-1}^+$ becomes equal to $t_0^-$.  

We say a vertex is at {\em time $i$} if 
it is the vertex where $t_{i-1} $ and $t_i$ meet. 

We start with the case of vertices of time not equal to 0, thus
these will be where our complexes $C_{i-1}$ and $C_i$ are
glued together by the identity between the $a_i,b_i$.

The link of such a vertex is shown in Figure \ref{links}. We now want to 
assign angles such that there are no circuits of length less than $2\pi$.

If we assign angles as in Figure \ref{buildingblocks}, then we get two types of 
link as shown in Figure \ref{links}. Figure \ref{links} i) corresponds to the 
automorphisms at the $i$-th stage both being $\rho$ or both being $\lambda$. 
Figure \ref{links} ii) corresponds to when one automorphism is $\lambda$ and 
one is $\rho$.

\begin{figure}
\center
\def\svgwidth{118mm}
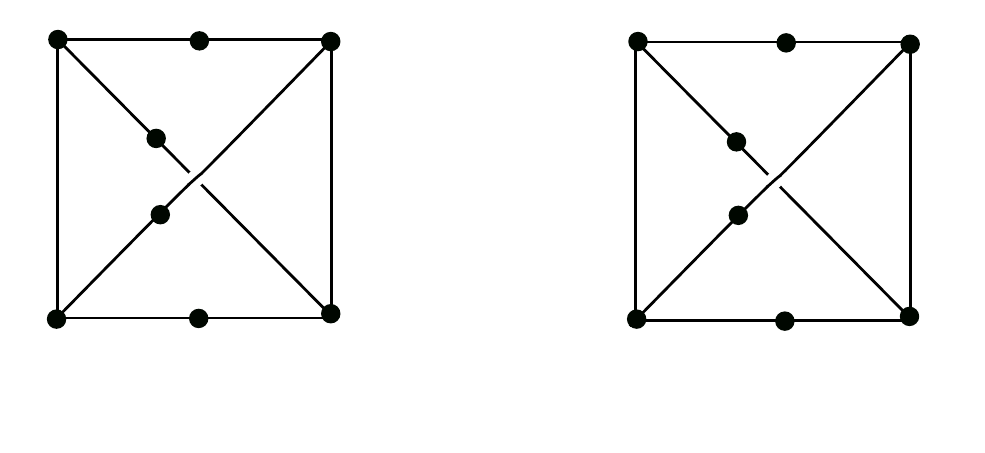
\caption{The possible links of a vertex.}
\label{links}
\end{figure}

We can see in either case that the link has no circuits of length less than 
$2\pi$. 

We now look at the case of the vertex at time 0. This will have to take into 
account the map $\theta$. We can consider the link as being split into 2 
halves as shown in Figure \ref{linkhalves}. The finite order maps defined 
earlier give vertex identifications. There are 16 possible links we may get in 
this way, corresponding to which automorphisms meet and to one of the 4 finite 
order automorphisms. All of these give a link which is homeomorphic to the 1 
skeleton of a tetrahedron with the set of angles depicted in 
Figure \ref{links}.

\begin{figure}
\center
\def\svgwidth{100mm}
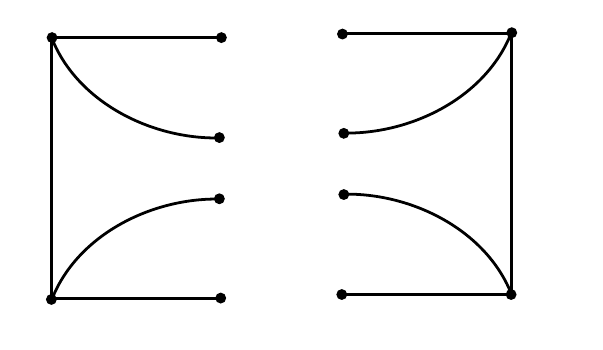
\caption{The 2 halves of a link.}
\label{linkhalves}
\end{figure}

\subsubsection{Square Complexes}

With a more careful assignment of angles we can see that the complexes above can be made into square complexes. 

\begin{figure}
\center
\def\svgwidth{135mm}
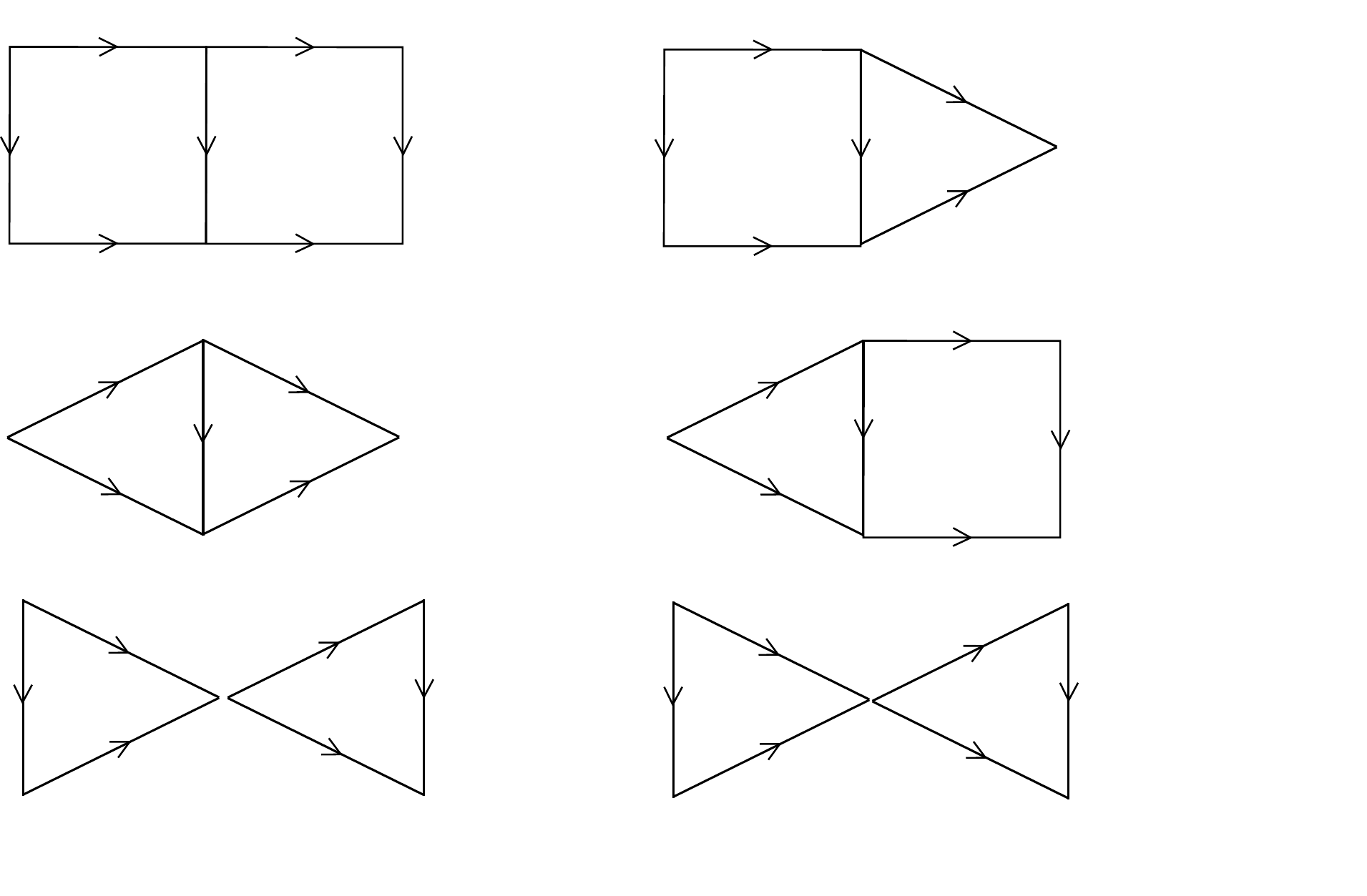
\caption{The 2 possible cases of automorphisms meeting at a vertex.}
\label{squares}
\end{figure}

We split the automorphism $\phi$ into one of three types depending on its 
decomposition in the semigroup described earlier:
\begin{enumerate}
\item $\phi= \rho^n$ or $\lambda^n$
\item $\phi= (\rho\lambda)^n(\rho\theta)^{\epsilon}$ or 
$(\lambda\rho)^n(\lambda\theta)^{\epsilon}$ where 
$\theta\in\{ \psi_3,\psi_4\}$ and $\epsilon\in\{0,1\}$
\item all other automorphisms.
\end{enumerate}

We will assign angles to our building blocks based on which meet at time $i$. 
We have depicted the two cases of our building blocks meeting at a vertex. 

In case 1 each time our building blocks meet, it will be of the type 
depicted in 
Figure \ref{squares} i). In this case we keep the angle assignment we had 
before and make the edges labelled $a_j$ or $b_j$ on the vertical sides of the 
two rectangles of length 2 and the other edges of length 1. We then make the 
edge between the 2 triangles of length $\sqrt{2}$ and subdivide the 
rectangles into squares of edge length 1, thus replacing the 
two triangles with a 
new building block which is the square formed by gluing them together. The 
link of the original vertices in this complex are depicted in 
Figure \ref{links2} i), where the edges corresponding to $a_i$ and $b_i$ 
have been suppressed as they have valence 2.

In case 2 each time our building blocks meet, it 
will be of the type depicted in 
Figure \ref{squares} ii). In this case we collapse the triangles to lines and 
then subdivide the resulting rectangles into 2 squares of side length 1. The 
link of the original vertices in this complex are depicted in 
Figure \ref{links2} ii), where the edges corresponding to $a_i$ and $b_i$ 
have been suppressed as they have valence 2.

\begin{figure}
\center
\def\svgwidth{80mm}
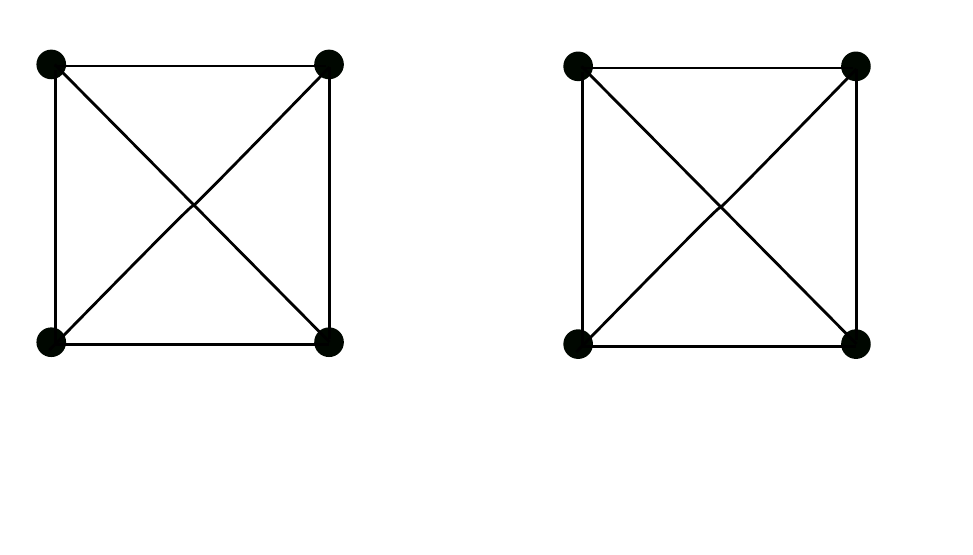
\caption{The links for automorphisms in cases 1 and 2.}
\label{links2}
\end{figure}

\begin{figure}
\center
\def\svgwidth{80mm}
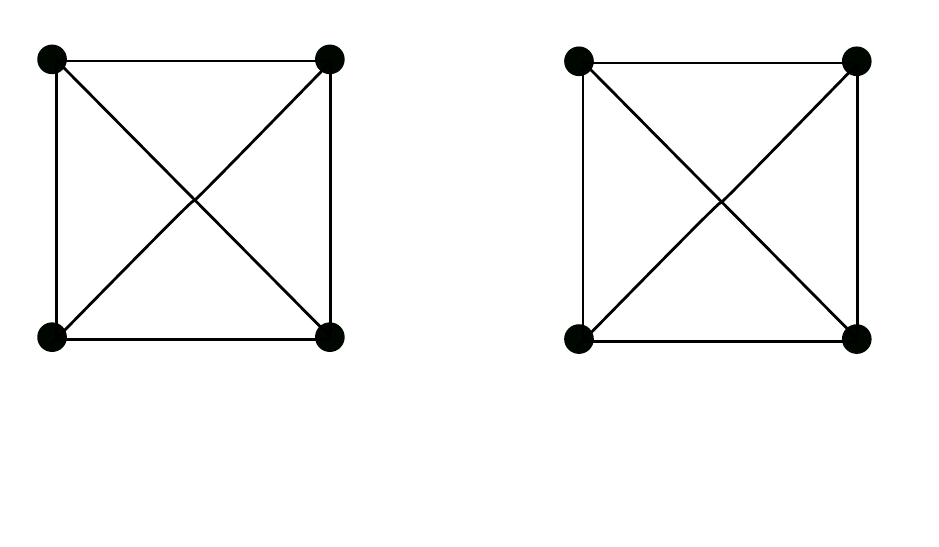
\caption{The links for automorphisms in cases 3.}
\label{case3links}
\end{figure}

In case 3 we will have a mix of both Figures \ref{squares}	i) and ii) and 
we collapse all the triangles to lines as we did for case 2. This  introduces 
degenerate squares from building blocks meeting as in 
Figure \ref{squares} i), 
which could affect the topology of the overall complex if there 
were a cylinder of such squares.
However here this will not happen as the only time a 
cylinder of degenerate squares could occur is if our automorphism is in case 1. 

The links of vertices in these complexes are depicted in 
Figure \ref{case3links}, where the edges corresponding to $a_i$ and $b_i$ have 
been suppressed as they have valence 2. The case of edges of 0 length are the 
degenerate squares where in fact the two edges become identified. 

In all the cases we see that the resulting complex will be a non-positively 
curved square complex.

\section{SQ universal groups}

A countable group $G$ is said to be SQ-universal (standing for
Subgroup Quotient) if every countable group can be embedded in
a quotient of $G$. This immediately implies that
$G$ contains a non abelian free group, and in turn is implied
by $G$ being large (having a finite index subgroup surjecting
to a non abelian free group). However an infinite simple group
$S$ containing $F_2$ would not be SQ-universal, nor would a
just infinite group such as $\textnormal{PSL}_n(\Z)$ for $n\geq 3$.

As for examples of groups which are SQ-universal, we have
all non elementary word hyperbolic groups by \cite{ol}.
This means that word hyperbolic groups with property (T)
provide lots of further
examples of groups which are SQ-universal but not large.
Moreover by \cite{amo} a finitely generated group
which is hyperbolic relative to any collection of proper subgroups
is SQ-universal (or virtually cyclic). 

An important class of groups in this area is 1-relator groups.
It was shown in \cite{bp} in 1978 that any group with a presentation
of deficiency at least 2 (thus any group having
an $n$-generator 1-relator presentation for $n\geq 3$) is large, 
leaving 2-generator 1-relator groups.
The question of when such a group $G$ contains $F_2$ has been
known for some time: yes, unless $G$ is isomorphic to a
Baumslag-Solitar group of the form $BS(1,n)$ (where $n\in\Z-\{0\}$)
or is cyclic.
Largeness is a different matter; for instance \cite{ep} showed that
the group $BS(m,n)$ is large if and only if $m$ and $n$ are not
coprime. In \cite{bcomp} we undertook extensive computation
suggesting that non large groups with a 2-generator 1-relator
presentation are few and far between, however there are more than
just Baumslag-Solitar examples. Moreover
we did not see a clean criterion that presented itself for
conjecture but in the case of SQ-universality there is a
statement that appeared in \cite{pmn} in 1973: a non cyclic
1-relator group is SQ-universal unless it is isomorphic to
$BS(1,n)$, thus if true this would be equivalent to containing $F_2$.

There have been a few results in this area since then; for instance
\cite{sacsch} showed in 1974 that a group with an $n$-generator 
1-relator presentation for $n\geq 3$ is SQ-universal (which was then
subsumed by the largeness result mentioned above). As for 2-generator
1-relator groups, Edjvet's thesis \cite{ephd} proves SQ-universality
in some useful cases. We showed in \cite{def1} Corollary 7.5
that it is true if the group is LERF but that is a very strong condition
to impose.

Recently the concept of a group being acylindrically hyperbolic was
introduced in \cite{os}. It holds if our group is non elementary  
and is relatively
hyperbolic with respect to a collection of proper subgroups. 
We will not need the definition here,
just the fact also in \cite{os} that such a group is SQ-universal.

This was followed up in \cite{mios} where the theory was applied to
various situations, including 1-relator groups to obtain the following.
Here a subgroup $H$ of a group $G$ is $s$-normal in $G$ if $H$ is
infinite and moreover $H\cap gHg^{-1}$ is infinite for all $g\in G$.
The relevance of this is that an $s$-normal subgroup of 
an acylindrically hyperbolic group must also be acylindrically
hyperbolic, so for instance $H\cong\Z$ being $s$-normal in $G$
implies that $G$ is not acylindrically hyperbolic  
(though it could certainly
be SQ-universal or even large).

\begin{prop} \label{1rel2gen} (\cite{mios} Proposition 4.20) 
Let $G$ be a group with two generators and one defining relator. Then at 
least one of the following holds:
\begin{itemize}
  \item[(i)] $G$ is acylindrically hyperbolic;
  \item[(ii)] $G$ contains an infinite cyclic $s$-normal subgroup.
  More precisely, either $G$ is infinite cyclic or it is an HNN-extension 
of the form $$G=\langle a,b, t\mid  a^t=b, w=1\rangle $$ of a 
2-generator 1-relator group
  $H= \langle a,b \mid w(a,b)\rangle $ with non-trivial center, so that 
$a^r=b^s$ in $H$ for some $r,s \in \Z \setminus\{0\}$. In the latter case $H$  
is (finitely generated free)-by-cyclic and contains a finite index normal 
subgroup splitting as a direct product  
of a finitely generated free group with an infinite cyclic group.
  \item[(iii)] $G$ is isomorphic to an ascending HNN extension
of a finite rank free group.
\end{itemize}
Moreover, the possibilities (i) and (ii) are mutually exclusive.
\end{prop}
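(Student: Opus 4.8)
The plan is to combine the classical Magnus--Moldavanskii decomposition of a two-generator one-relator group with a criterion for acylindrical hyperbolicity of groups acting on trees, and then to isolate the rigid situation in (ii) via the classification of one-relator groups with non-trivial centre. First I would dispose of the degenerate cases. If $G$ is cyclic we are immediately in (ii), so assume not. If the defining relator $w$ is a proper power then $G$ has torsion, and a one-relator group with torsion is word hyperbolic; thus $G$ is either elementary, hence infinite cyclic or infinite virtually cyclic and so containing an infinite cyclic $s$-normal subgroup (case (ii)), or non-elementary hyperbolic and therefore acylindrically hyperbolic (case (i)). Hence I may assume $w$ is cyclically reduced and not a proper power.

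Next I would set up the HNN structure. Writing the image of $w$ in $H_1(G)=\Z^2$ as $(\alpha,\beta)$ and applying a suitable element of $\mathrm{GL}_2(\Z)$, realised as an automorphism of $F(a,b)$, I can change to new generators so that one of them, say $y$, has exponent sum zero in $w$. The Magnus--Moldavanskii rewriting with stable letter $t=x$ then exhibits $G$ as an HNN extension $G=\langle K,t\mid tAt^{-1}=B\rangle$, where the base $K$ is a one-relator group in the conjugates $y_i=x^iyx^{-i}$ and, by the Freiheitssatz, the associated subgroups $A,B$ are free of equal finite rank. I would then study the action of $G$ on the Bass--Serre tree $T$ of this splitting.

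The trichotomy now follows from the type of this action. If the splitting is ascending, i.e. one of $A,B$ equals the base $K$, then that associated subgroup, being free, coincides with $K$, so $K$ is free and $G$ is an ascending HNN extension of a finite-rank free group: case (iii). Otherwise the splitting is non-ascending and $G$ acts on $T$ with unbounded orbits; here I would try to deduce acylindrical hyperbolicity from the criterion for actions on trees, which applies as soon as some edge stabiliser is weakly malnormal in $G$, meaning there is $g$ with $A\cap gAg^{-1}$ finite, hence trivial since $A$ is free. When such a $g$ exists, that criterion gives that $G$ is virtually cyclic or acylindrically hyperbolic, and the former is degenerate, yielding (i).

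The hard case, and the main obstacle, is when the splitting is non-ascending yet no edge stabiliser is weakly malnormal, so that $A\cap gAg^{-1}$ is infinite for every $g\in G$. Here I would show that this persistence forces $G$ to contain a one-relator subgroup $H=\langle a,b\mid w\rangle$ with non-trivial centre, with $G$ realised as the HNN extension $\langle a,b,t\mid a^t=b,\,w=1\rangle$ over $H$ with infinite cyclic edge group, and that the common cyclic subgroup witnessing $a^r=b^s$ is $s$-normal in $G$; the classification of one-relator groups with non-trivial centre (Murasugi, Pietrowski) then gives that $H$ is (finitely generated free)-by-cyclic with a finite-index subgroup isomorphic to $F_k\times\Z$, completing (ii). The delicate point I expect to fight is ruling out that the infinite intersections arise from a non-abelian free $s$-normal subgroup rather than an infinite cyclic one, so that the edge group is pinned down to $\Z$ and the centre classification applies. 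Finally, the mutual exclusivity of (i) and (ii) is immediate from the fact recalled above that an $s$-normal subgroup of an acylindrically hyperbolic group is itself acylindrically hyperbolic: since $\Z$ is not acylindrically hyperbolic, no group satisfying (i) can contain an infinite cyclic $s$-normal subgroup.
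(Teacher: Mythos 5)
First, a point of comparison: the paper you are being checked against does not actually prove this statement. It is imported verbatim as Proposition 4.20 of \cite{mios}, so the only proof to measure your attempt against is the one in that reference. Your outline does follow the same broad route as that proof: dispose of the cyclic and proper-power (torsion) cases, arrange a generator of zero exponent sum, take the Magnus--Moldavanskii HNN splitting over free Magnus subgroups, send the ascending case to (iii), use the Minasyan--Osin tree criterion (weak malnormality of an edge stabiliser) to reach (i), and obtain mutual exclusivity from the fact, also recalled in the paper, that an $s$-normal subgroup of an acylindrically hyperbolic group is itself acylindrically hyperbolic while $\Z$ is not. (One notational slip: the stable letter must be the generator of zero exponent sum; as written you set the exponent sum of $y$ to zero but rewrite over the conjugates $x^iyx^{-i}$ with stable letter $x$, which requires the exponent sum of $x$ in $w$ to vanish.)

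However, there is a genuine gap, and it sits exactly where the content of the proposition lies. Your trichotomy leaves open the case where the splitting is non-ascending and $A\cap gAg^{-1}$ is infinite for every $g$, i.e. the edge group is $s$-normal, and there you only assert that this ``forces'' the structure in (ii); you yourself flag the possibility that the infinite intersections come from a free $s$-normal subgroup of rank at least $2$ as the delicate point, but offer no argument against it. This possibility cannot be waved away: whenever the rewritten relator involves at least three of the conjugates $y_i$, the Magnus edge groups are free of rank at least $2$, and an $s$-normal non-abelian free subgroup is perfectly compatible with acylindrical hyperbolicity (free groups are themselves acylindrically hyperbolic), so the exclusivity argument gives you no purchase on this case, and the tree criterion is unavailable precisely because weak malnormality fails. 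Closing it is the technical heart of the proof in \cite{mios}, which has to invoke substantive one-relator theory beyond the Freiheitssatz --- results in the tradition of Newman, Bagherzadeh and Collins controlling intersections of Magnus subgroups of one-relator groups and of their conjugates --- so that only the cyclic-edge-group situation can survive. By contrast, the part you leave unproved but correctly attribute is routine: once the edge group is $\langle a\rangle\cong\Z$ and $s$-normal, applying $s$-normality to the stable letter makes $\langle a\rangle\cap\langle b\rangle$ infinite, so $a^r=b^s\neq 1$ for some non-zero $r,s$; this element commutes with both generators and hence is a non-trivial central element of $H$, after which Murasugi--Pietrowski yields the free-by-cyclic structure and the finite-index $F_k\times\Z$ subgroup. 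It is the rank $\geq 2$ case that your proposal cannot close, so as it stands the trichotomy is not established.
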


Thus this establishes that groups in class (i) are SQ-universal. We will show
the same for groups in (ii) then discuss results for (iii).
When considering groups in case (ii), we will use the class of generalized
Baumslag-Solitar, or GBS, groups. These can be defined as those finitely
generated groups which act on a tree with infinite cyclic vertex and
edge stablisers. The two recent papers \cite{lgbs} and \cite{lbs}
cover a lot of ground in this area and we now mention the points we
will be using, referring to them for more detail.

We can describe a GBS group using the graph of groups theory,
where a finite graph (possibly with loops and/or multiple edges)
has labels consisting of a non zero integer at each end of each edge.
This label tells us the index of this edge group in the adjacent
vertex group, which determines the subgroup uniquely. In general
many different finite labelled graphs can give rise to isomorphic
GBS groups. One operation that can be performed without change of
the underlying group is an elementary collapse. This is when one
end of an edge $e$ next to a vertex $v$
is labelled $\pm 1$ and the edge is not a self loop.
We can then contract this edge and multiply all other labels next to
$v$ by the label at the other end of $e$. By doing this repeatedly,
we may assume that any edge with an end labelled by $\pm 1$ is a self loop.

Note that all GBS groups have deficiency 1, that is they admit a presentation
with one more generator than relator. In particular there always exists
a surjective homomorphism from any GBS group to $\Z$.

\begin{thm} \label{sqcs2}
If the group $G$ is as in case (ii) of the preceding Proposition
then $G$ is a generalized Baumslag-Solitar group. Moreover any
generalized Baumslag-Solitar group is either SQ-universal or
it is isomorphic to the Baumslag-Solitar group $BS(1,j)$ for
some $j\in\Z\setminus\{0\}$ or is infinite cyclic.
\end{thm}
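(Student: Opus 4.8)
The plan is to prove the two assertions separately. First I would show that a group $G$ as in case (ii) is a GBS group. The Proposition already hands us the structure: $G$ is an HNN extension $\langle a,b,t\mid a^t=b,\ w=1\rangle$ over a base $H=\langle a,b\mid w\rangle$ which is (finitely generated free)-by-cyclic with non-trivial centre, and $a^r=b^s$ in $H$. The centre of such a group being non-trivial forces $H$ itself to act on a tree with infinite cyclic edge and vertex groups — indeed a (f.g.\ free)-by-cyclic group with centre is, up to finite index, $F_k\times\Z$, and one reads off a GBS structure from the free-by-cyclic splitting (the $\Z$ factors giving cyclic vertex/edge groups). The stable letter $t$ conjugating $a$ to $b$ then extends this to a graph-of-groups decomposition of $G$ with all edge and vertex groups infinite cyclic, which is exactly the definition of a GBS group. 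This first half is essentially a matter of assembling the tree action from the given data, so I expect it to be routine once the centre of $H$ is located and identified with the relevant $\Z$.

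The substantive half is the dichotomy: every GBS group is either SQ-universal, or is $BS(1,j)$, or is $\Z$. Here I would work with the labelled graph $\mathcal G$ describing the GBS structure and use the normalisation permitted by elementary collapses (stated just before the theorem): after repeatedly collapsing, every edge with an endpoint labelled $\pm1$ may be assumed to be a self-loop. I would then split into cases on the shape of the normalised graph. If the underlying graph has a cycle (first Betti number $\geq 1$ of the graph itself) then $G$ surjects a non-abelian object or acts on a tree with a loxodromic isometry giving a free action on a sub-tree, and more profitably one produces a free product or an amalgam decomposition yielding acylindrical hyperbolicity, hence SQ-universality; if the graph is a tree with at least two edges one similarly obtains a non-trivial splitting as an amalgamated product $A*_C B$ with $C$ infinite cyclic of infinite index on at least one side, which is acylindrically hyperbolic unless it is dihedral-like. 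The genuinely exceptional configurations — a single vertex with a single self-loop, giving precisely $BS(m,n)$ — I would handle directly: $BS(m,n)$ is acylindrically hyperbolic (hence SQ-universal) unless $|m|=1$ or $|n|=1$, and the cases $|m|=|n|=1$ collapse to $\Z\times\Z$ or the Klein-bottle group (still SQ-universal checks fail, so these must be excluded or shown SQ-universal separately), leaving exactly $BS(1,j)$ and $\Z$ as the non-SQ-universal outcomes.

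The main obstacle I anticipate is the careful bookkeeping of the degenerate cases in the graph analysis. Acylindrical hyperbolicity of a group acting on a tree is not automatic: one needs the action to be non-elementary and to have weakly-proper-discontinuous or acylindrical-along-the-edge behaviour, and the whole point of the theorem is that certain GBS groups (the $BS(1,j)$ and $\Z$) fail exactly this. So the delicate step is to give a clean criterion, in terms of the normalised labelled graph, separating the SQ-universal GBS groups from the two exceptional families, and to verify that the only graphs failing the criterion are the single self-loop with a label $\pm1$ (yielding $BS(1,j)$) and the single vertex with no edge (yielding $\Z$). I would lean on the two cited papers \cite{lgbs} and \cite{lbs} for the structural facts about GBS groups and their splittings, and invoke the SQ-universality of acylindrically hyperbolic groups from \cite{os} to convert each non-exceptional splitting into the desired conclusion.
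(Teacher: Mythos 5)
Your overall division into cases by the shape of the labelled graph matches the paper's, but the engine you propose for the generic cases --- extracting acylindrical hyperbolicity from the splittings and then citing \cite{os} --- cannot work, and this is the heart of the matter. No GBS group whatsoever is acylindrically hyperbolic. In a GBS tree every edge group has finite index in its two adjacent vertex groups, so by connectedness any two vertex stabilisers are commensurable; hence every vertex group is an infinite cyclic $s$-normal subgroup. Since an $s$-normal subgroup of an acylindrically hyperbolic group must itself be acylindrically hyperbolic, and $\Z$ is not, acylindrical hyperbolicity fails for \emph{every} GBS group. In particular your claim that $BS(m,n)$ is acylindrically hyperbolic when $|m|,|n|\geq 2$ is false, and the splitting criteria of \cite{mios} (Corollaries 2.2 and 2.3) never apply here, because their hypothesis is precisely that the edge group is not $s$-normal. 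This is exactly why cases (i) and (ii) of Proposition \ref{1rel2gen} are mutually exclusive, and why the theorem needs a mechanism other than acylindrical hyperbolicity: the whole point is that these groups are SQ-universal despite not being acylindrically hyperbolic.

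The paper closes each case with explicit quotients or largeness instead. If the graph has at least two cycles, killing all vertex groups gives a surjection onto a free group of rank at least $2$. If the graph is a tree, $G$ has non-trivial centre, so by \cite{lgbs} Proposition 4.1 it is $F_k\rtimes_\alpha\Z$ with $\alpha$ of finite order in $\mbox{Out}(F_k)$, hence virtually $F_k\times\Z$ and large. If there is exactly one cycle and it is not a self-loop, cutting one edge of the cycle leaves a tree whose group surjects onto $\Z$ with no vertex group in the kernel; a folklore lemma on quotients of HNN extensions then gives a surjection of $G$ onto $BS(k_1m,k_2n)$ with both indices of modulus greater than $1$, and such Baumslag--Solitar groups are SQ-universal by \cite{ephd} in the coprime case and by killing $a^d$ (yielding $\Z*\Z_d$) otherwise. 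Finally, a self-loop carrying a label $\pm 1$ but attached to a larger graph is handled by a surjection onto $\Z*\Z_p$. You would need to replace your acylindricity step by constructions of this kind; your intended criterion separating ``acylindrically hyperbolic'' GBS groups from the exceptions would instead classify all of them as exceptions. (A lesser point: for the first half the paper does not assemble a graph of groups by hand but quotes Kropholler's Theorem C \cite{krp}: $G$ is a torsion-free one-relator group, hence of cohomological dimension $2$ by Lyndon's theorem, with an infinite cyclic $s$-normal subgroup, and this characterises GBS groups. Your direct construction would additionally require knowing that $a$ and $b$ are elliptic in some GBS tree for $H$, which is where Pietrowski-type structure theory of one-relator groups with centre would have to enter.)
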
  
\begin{proof}
For the first part we can use Theorem C of \cite{krp}. This states
that the non cyclic finitely generated groups of cohomological
dimension 2 that have an infinite cyclic $s$-normal subgroup are
exactly the generalized Baumslag-Solitar groups. Now a 1-relator
group has cohomological dimension 2 if the relator is not a proper
power by \cite{lyn}, but a proper power gives rise to a group with
torsion, whereas the groups in case (ii) are all torsion free.

Now let $\Gamma$ be the underlying graph of our graph of groups that
results in the GBS group $G$. As mentioned above we assume that the only edge
labels equal to $\pm 1$ appear on self loops.

It is well known that if $\Gamma$ contains more than one cycle
(here we include self loops as cycles) then $G$ surjects to $F_2$
and so is SQ-universal, because we introduce a stable letter for
each cycle when forming $G$, and all vertex subgroups can be
quotiented out to leave only these stable letters which have no
relations between them.

It is also known that if $\Gamma$ is a tree then $G$ is virtually
$F_k\times\Z$ for $k\geq 2$ which is large, hence so is $G$. This can
be seen by quoting
Proposition 4.1 of \cite{lgbs} which
states that a group is a GBS group
with non trivial centre if and only if it is of the form 
$F_k\rtimes_\alpha\Z$ with $\alpha$ having finite order in $\mbox{Out}(F_k)$.
Now here $G$ will certainly
have a non trivial centre, namely an element
which is a common power of all the generators of the vertex subgroups
as these form a generating set for $G$. Moreover in the case of a tree
the surjective homomorphism $\theta$
from $G$ to $\Z$ has the property that
no non trivial element of a vertex (or edge) group lies in its kernel,
as if so then the whole vertex group does, thus so do the neighbouring
edge groups and so on across the whole tree. 

We now assume that $\Gamma$ has exactly one cycle $C$. First assume
this is not a self loop. We pick one edge $e$ lying in $C$ and remove
the interior of $e$ to form a tree $T$ and a group $H$ coming from
considering $T$ as the corresponding graph of groups. Thus we have
our homomorphism $\theta:H\twoheadrightarrow\Z$ as above,
with $G$ obtained from $H$ by taking generators $h_1,h_2$ of the
vertex groups at each end $v_1,v_2$ of $e$ and then adding a stable letter
$t$ which results in the presentation
\[G=\langle H,t|th_1^mt^{-1}=h_2^n\rangle\]
where $m,n$ are the labels at each end of $e$, neither of which are
0 or $\pm 1$. We now obtain a surjection from $G$ to a Baumslag-Solitar
group using the following folklore lemma:
\begin{lem}
Let $G$ be an HNN extension of the group $H$ amalgamating the subgroups
$A,B$ via the isomorphism $\phi:A\rightarrow B$. Suppose we have a
homomorphism $\theta$ from $H$ onto a quotient $Q$ with $\theta(A)$
isomorphic to $\theta(B)$ such that $\phi$ descends to an isomorphism
$\overline{\phi}$ from $\theta(A)$ to $\theta(B)$, meaning that
$\overline{\phi}$ is well defined and bijective with $\overline{\phi}
\theta=\theta\phi$. (This occurs if and only if $\phi(K)=L$ for
$K,L$ the kernels of the restriction of $\theta$ to $A,B$ respectively.)
Then on forming the HNN extension $R$ of $Q$ with stable letter $s$
amalgamating $\theta(A)$ and $\theta(B)$ via $\overline{\phi}$, we have
that the original HNN extension $G$ has this new HNN extension $R$ as a
quotient.
\end{lem}
\begin{proof}
We define a homomorphism from the free product $H*\langle t\rangle$
onto $R$ sending $t$ to $s$ and $h\in H$ to $\theta(H)\in Q$. We see
that this factors through $G$ because any relation in $G$ of the form
$tat^{-1}=\phi(a)$ has the left hand side mapped by $\theta$ to
$s\theta(a)s^{-1}$ and the right hand side to $\overline{\phi}\theta(a)$,
and these two things are equal in $R$ by the HNN construction.
\end{proof}

Consequently in our case we have $G$ surjects to $BS(k_1m,k_2n)$, where
$k_1=\theta(h_1)$ which is not equal to zero as mentioned above
because $H$ is formed from a tree,
and similarly for $k_2$. Now a Baumslag-Solitar group 
$BS(i,j)=\langle t,a|ta^it^{-1}=a^j\rangle$ is
known to be SQ-universal if neither of $i,j$ equal $\pm 1$, by Lemma
1.4.3 of \cite{ephd} if $i$ and $j$ are coprime, and by the well known
trick of setting $a^d$ equal to the identity when $d>1$ divides $i$ and
$j$, to get a surjection to $\Z*\Z_d$ which is virtually free otherwise.
As $|m|$ and $|n|$ are both greater than 1, we have that $G$
surjects to an SQ-universal group and so itself is SQ-universal.

We are now only left with the case where there is a single self loop
$L$ in our graph $\Gamma$, so that now $v_1=v_2$ and $h_1=h_2$.
The above proof also works here by removing $L$ this time,
unless
$|k_1|=|m|=1$ (or $|k_2|=|n|=1$ in which case we replace $BS(i,j)$ with
the isomorphic group $BS(j,i)$). If $L$ is all of $\Gamma$ then we
have that $G$ is just $BS(1,j)$ which is soluble, and so is a genuine
exception to being SQ-universal. Otherwise we note that $|k_1|=\pm 1$
implies that $h_1$ is mapped by $\theta$ to a generator of $\Z$.
On taking any edge $e_i$
not equal to $L$ with one endpoint $v_1$, let $a_i$ be the label
of $e_i$ at this end and $b_i$ the label at the other end
of $e_i$ next to the vertex group $\langle x_i\rangle$, say. We must
have $b_i$ dividing $a_i$ because of the relation $\theta(x_i^{b_i})=
\theta (h_1^{a_i})$ and the fact that $\theta(h_1)=\pm 1$. We now choose
a particular edge $e_i$ and remove the loop $L$ and the vertex $v_1$
from $\Gamma$, to form a possibly
disconnected graph that is a union of trees
$T_k$, with $T_1$ the tree containing the vertex group $\langle x_i\rangle$.

We then take a prime $p$ dividing $b_i$ and
consider the quotient of $G$ formed by setting $h_1$ and $x_i^p$ equal
to the identity. We have a surjective homomorphism with
domain the group obtained as a graph of groups from the tree $T_1$
and image $\Z_p$, consisting of the homomorphism to $\Z$
in the case of the tree $T_1$
and then composing with the map from $\Z$
to $\Z_p$.
We now extend this to a homomorphism from $G$ onto
$\Z*\Z_p$ which is SQ-universal as follows: send the stable letter $t$
to $1\in \Z$, so that the relation $th_1t^{-1}=h_1^n$ obtained from the
loop $L$ now has 
both sides sent to the identity. Moreover this holds for the relation
$x_i^{b_i}=h_1^{a_i}$ obtained from the edge $e_i$ as $p$ divides $b_i$.
Finally we send all vertex groups not in the component $T_1$ to the
identity, with the $x_j^{b_j}=h_1^{a_j}$ relations obtained from
the other edges $\{e_j:j\neq i\}$ that have endpoint $v_1$
all automatically satisfied.
\end{proof}
 
Note: this result can be compared to \cite{lgbs} Theorem 6.7 in which
the large GBS groups are determined, but there are cases for which the
graph $\Gamma$ is a single cycle where the group $G$ is SQ-universal
but not large.

We now come to case (iii), that of $G$ being equal to the HNN extension
$F_k*_{\theta}$, where $\theta:F_k\rightarrow F_k$ is injective but 
need not be surjective (if not then we call this a strictly
ascending HNN extension of $F_k$). Here we can
quote results of the first author in \cite{def1}. Theorem 5.4 of that
paper states that $G$ will be SQ-universal whenever
$\theta$ is an automorphism (unless $G\cong \Z,\Z\times\Z$ or the Klein bottle
group when the rank $k$ is 0 or 1). This is proved by showing that
$\Z\times\Z\leq G$ implies that $G$ is large, and then invoking
Ol'shanski\u{i}'s theorem on the SQ-universality of word hyperbolic groups
and the result in \cite{brink}
that not containing $\Z\times\Z$ and being word hyperbolic
are equivalent in the class of groups $F_k*_\theta$ when $\theta$ is an
automorphism.

In the case where $G$ is a strictly ascending HNN extension of a finite
rank free group, we have further results but they are not quite definitive.
Again we have that if $\Z\times\Z\leq G$ then $G$ is large (or is equal
to $\Z\times\Z$ or the Klein bottle group) by \cite{def1} Corollary 4.6.
However in the strictly ascending case there are examples where
$G$ does not contain $\Z\times\Z$
but does contain a Baumslag-Solitar subgroup, which must be of the form
$BS(1,m)$ for $|m|\neq 1$
so that $G$ fails to be word hyperbolic. In \cite{kap} it is conjectured
that a strictly ascending HNN extension of a finite rank free group is
is word hyperbolic if it does not contain a Baumslag-Solitar subgroup
and this conjecture seems to be widely believed, but a proof might well
require the machinery of train track maps to be developed in full for
injective endomorphisms of $F_k$. Moreover it is an open question whether
a 1-relator group (or indeed a group with a finite classifying space)  
containing no Baumslag-Solitar subgroups is word hyperbolic, so we would
be covered in our case if any of these (or their intersection) turned out to
be true.  

As for when $G$ contains $BS(1,m)$ for $|m|\neq 1$, \cite{def1} Theorem
4.7 states that either $G$ is large, or $G$ is itself a Baumslag-Solitar
group of the form $BS(1,n)$, or $G\not\cong BS(1,n)$ 
but $G$ has virtual first Betti number equal
to 1 and it is conjectured that the last case does not occur.   
Putting all this together, we have our result on the SQ-universality of
2-generator 1-relator groups:
\begin{co} \label{sq} 
If $G$ is a group given by a 2-generator 1-relator presentation
that is not $\Z$, $\Z\times\Z$ or the Klein bottle group 
then either $G$ is an SQ-universal group, 
or $G$ is a strictly ascending HNN extension $F_k*_\theta$ of a free group
$F_k$ which is not word hyperbolic and such that:\\
(i) either $G$ contains no Baumslag-Solitar subgroup (conjecturally
this does not occur)
or\\
(ii) $G$ contains a Baumslag-Solitar group $BS(1,m)$ for $|m|\neq 1$
but does not contain
$\Z\times\Z$ and the virtual first Betti number of $G$ is 1
(conjecturally this only occurs if $G\cong BS(1,n)$ for $|n|\neq 1$).
\end{co}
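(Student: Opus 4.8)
The plan is to read the Corollary as a bookkeeping assembly of Proposition \ref{1rel2gen}, Theorem \ref{sqcs2}, and the cited results of \cite{def1}, rather than as a source of new mathematics: every step is a routing of $G$ into a case already resolved. First I would apply Proposition \ref{1rel2gen} to place $G$ in one of its three cases. In case (i) the group is acylindrically hyperbolic, hence SQ-universal by \cite{os}, and nothing more is needed. In case (ii), Theorem \ref{sqcs2} identifies $G$ as a generalized Baumslag-Solitar group that is either SQ-universal, or isomorphic to $BS(1,j)$, or infinite cyclic. I would discard the infinite cyclic possibility by hypothesis, note that $BS(1,1)\cong\Z\times\Z$ and $BS(1,-1)$ is the Klein bottle group (both excluded), and observe that $BS(1,j)$ with $|j|\neq 1$ is itself a strictly ascending HNN extension of $F_1=\Z$ falling into exceptional subcase (ii) of the statement: it contains $BS(1,j)$, is metabelian and so contains no $\Z\times\Z$, has virtual first Betti number $1$, and is not word hyperbolic.

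The substance of the argument lies in case (iii), where $G=F_k*_\theta$. If $\theta$ is an automorphism I would quote \cite{def1} Theorem 5.4, which yields SQ-universality once the three small exceptions (all excluded here) are removed. So suppose $\theta$ is strictly ascending. If $\Z\times\Z\leq G$, then \cite{def1} Corollary 4.6 makes $G$ large, hence SQ-universal, again after removing the excluded exceptions. If instead $\Z\times\Z\not\leq G$ and $G$ is word hyperbolic, then $k\geq 2$ (for $k=1$ the group is some $BS(1,j)$, which is never hyperbolic), so $G$ contains $F_2$, is non-elementary, and the SQ-universality result for non-elementary word hyperbolic groups \cite{ol} applies. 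The only survivors are the strictly ascending, non-word-hyperbolic groups with $\Z\times\Z\not\leq G$, and these split according to whether $G$ contains a Baumslag-Solitar subgroup. If it does not, we land in exceptional subcase (i). If it does, the subgroup is forced to be $BS(1,m)$ with $|m|\neq 1$ (as recorded in the discussion preceding the Corollary, the exclusion of $\Z\times\Z$ rules out $BS(1,\pm 1)$), and then \cite{def1} Theorem 4.7 leaves only the possibilities that $G$ is large (hence SQ-universal), that $G\cong BS(1,n)$, or that $G\not\cong BS(1,n)$ but has virtual first Betti number $1$; the latter two are precisely exceptional subcase (ii).

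I expect the main obstacle to be organisational rather than computational: ensuring the case split is genuinely exhaustive and routing each boundary group $\Z$, $\Z\times\Z$, the Klein bottle group, and the various $BS(1,j)$ to exactly one conclusion with no omission or double counting. The two delicate points are reconciling the $BS(1,j)$ output of Theorem \ref{sqcs2} in case (ii) with the ``strictly ascending HNN extension'' description demanded by the Corollary, and confirming that every Baumslag-Solitar subgroup arising in the strictly ascending non-hyperbolic regime is of the form $BS(1,m)$. Both rest on facts already established above, so no further calculation is required once the dictionary between the three Proposition cases and the three cited theorems is set up carefully.
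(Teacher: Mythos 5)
Your proposal is correct and follows essentially the same route as the paper: Corollary \ref{sq} is stated there as a summary (``putting all this together'') of exactly the assembly you describe --- Proposition \ref{1rel2gen} with case (i) handled by \cite{os}, case (ii) by Theorem \ref{sqcs2}, and case (iii) split via \cite{def1} Theorem 5.4, Corollary 4.6 and Theorem 4.7, together with Ol'shanski\u{i}'s theorem \cite{ol} for the word hyperbolic strictly ascending case. Your explicit routing of the $BS(1,j)$ outcome of Theorem \ref{sqcs2} into exceptional subcase (ii) is left implicit in the paper, but it is the right reading.

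One justification needs repair: ``is metabelian and so contains no $\Z\times\Z$'' is a non sequitur, since $\Z\times\Z$ is itself metabelian. The fact you need is nevertheless standard: writing $BS(1,j)\cong\Z[1/j]\rtimes_j\Z$ with $|j|\geq 2$, an abelian subgroup $A$ either lies in $\Z[1/j]$, which is locally cyclic, or contains an element $x$ mapping to some $k\neq 0$ in $\Z$; in the latter case conjugation by $x$ multiplies every element of $A\cap\Z[1/j]$ by $j^k\neq 1$, so $A\cap\Z[1/j]$ is trivial and $A$ embeds in $\Z$. Either way $A\not\cong\Z\times\Z$. With that substitution in place of the metabelian claim, your argument is complete and matches the paper's.
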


We finish this section with a couple of unconditional results.
\begin{co} \label{comm} If $G=\langle a,b|w(a,b)\rangle$ and $w$ is
in the commutator subgroup of $F(a,b)$ 
(and without loss of generality cyclically reduced)
then $G$ is SQ-universal, except when
$G\cong\Z\times\Z$ for $w$ a cyclic permutation of
$aba^{-1}b^{-1}$ or its inverse.      
\end{co}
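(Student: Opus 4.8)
The plan is to reduce everything to the trichotomy of Proposition \ref{1rel2gen} together with Theorem \ref{sqcs2} and Corollary \ref{sq}, exploiting the single extra hypothesis that $w$ lies in the commutator subgroup. The first observation is that this hypothesis forces the relator to die in the abelianization, so $G^{ab}\cong\Z\times\Z$ and hence the first Betti number $b_1(G)$ equals $2$. This numerical fact is what lets me discard almost every potential non-SQ-universal exception, since each of them has (virtual) first Betti number at most $1$.

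Concretely, I would run through the three cases of Proposition \ref{1rel2gen}. In case (i), $G$ is acylindrically hyperbolic and so SQ-universal by \cite{os}, with nothing further to check. In case (ii), Theorem \ref{sqcs2} says $G$ is a generalized Baumslag--Solitar group, hence SQ-universal or isomorphic to $BS(1,j)$ or to $\Z$; but $\Z$ has $b_1=1$ and $BS(1,j)$ has $b_1=1$ whenever $j\neq 1$ (its abelianization is $\Z\oplus\Z_{|j-1|}$), the sole survivor with $b_1=2$ being $BS(1,1)\cong\Z\times\Z$. Since the only $2$-generator $1$-relator group isomorphic to $\Z\times\Z$ has relator a cyclic permutation of $aba^{-1}b^{-1}$ or its inverse, this yields exactly the stated exception.

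The substantive work is case (iii), where $G\cong F_k*_\theta$. If $\theta$ is an automorphism then $G$ is free-by-cyclic and Theorem 5.4 of \cite{def1} gives SQ-universality unless $G$ is $\Z$, $\Z\times\Z$ or the Klein bottle group; of these only $\Z\times\Z$ has $b_1=2$ (the Klein bottle relator $abab^{-1}$ has $a$-exponent sum $2$, so it is not in $[F,F]$), and we again land on the single exception. If $\theta$ is strictly ascending I would appeal to \cite{def1} Corollary 4.6 and Theorem 4.7: if $G$ contains $\Z\times\Z$ or a subgroup $BS(1,m)$ with $|m|\neq 1$ then $G$ is large, the competing conclusions being $\Z\times\Z$, the Klein bottle, $BS(1,n)$ or virtual first Betti number $1$, all of which are incompatible with a strictly ascending group of first Betti number $2$. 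Hence $G$ is large and so SQ-universal.

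The one case I expect to be the genuine obstacle is a strictly ascending $F_k*_\theta$ containing no Baumslag--Solitar subgroup at all, which is precisely exception (i) of Corollary \ref{sq}: conjecturally vacuous but not known to be. Here $b_1=2$ no longer obviously helps, since that flagged exception carries no Betti-number hypothesis. My proposed route is to exploit the two independent characters arising from $b_1(G)=2$ to produce a second fibration: if some primitive character $\chi$ has finitely generated, hence free, kernel, then $G$ is genuinely free-by-cyclic and the Bieri--Neumann--Strebel theory together with \cite{brink} applies, so the absence of $\Z\times\Z$ forces $G$ to be word hyperbolic, whereupon non-elementarity (guaranteed by $b_1=2$) and \cite{ol} give SQ-universality. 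The delicate point will be to guarantee that such a second fibration exists, equivalently that the BNS invariant $\Sigma(G)$ contains an antipodal pair of characters, and it is here that one must either invoke the train-track machinery underlying Kapovich's conjecture or find a direct argument special to relators lying in the commutator subgroup.
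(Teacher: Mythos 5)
Your reduction is sound as far as it goes: the observation that $w\in[F(a,b),F(a,b)]$ forces $G^{\rm ab}\cong\Z^2$, and the use of $b_1(G)=2$ to eliminate $\Z$, $BS(1,j)$ for $j\neq 1$, the Klein bottle group, and the virtual-first-Betti-number-one exception of \cite{def1} Theorem 4.7, are all correct and match what is needed. But the proposal does not prove the statement. As you yourself flag, it leaves untouched the case of a strictly ascending HNN extension $F_k*_\theta$ that is not word hyperbolic and contains no Baumslag-Solitar subgroup (exception (i) of Corollary \ref{sq}); this class is only \emph{conjecturally} empty, and your suggested repair --- produce an antipodal pair of characters in the BNS invariant, possibly by developing train-track machinery for endomorphisms --- is stated as a hope rather than carried out. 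So there is a genuine gap, and it sits exactly where you predicted.

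The paper closes that gap with the tool you name, but organized in the opposite order: it runs the BNS dichotomy \emph{first}, before ever invoking the trichotomy of Proposition \ref{1rel2gen}. Since $b_1(G)=2$, the character sphere is $S^1$ and $\Sigma\subseteq S^1$ is open. If $\Sigma\cup-\Sigma\neq S^1$, there is a character $\chi:G\twoheadrightarrow\Z$ with $\pm\chi\notin\Sigma$, expressing $G$ as a \emph{non-ascending} HNN extension; the Magnus decomposition of this extension places $G$ in case (i) or case (ii) of Proposition \ref{1rel2gen}, each of which is SQ-universal (by \cite{os}, respectively Theorem \ref{sqcs2}). If instead $\Sigma\cup-\Sigma=S^1$, then the antipodal pair you were missing exists automatically: $\Sigma$ and $-\Sigma$ are nonempty open sets covering the connected circle, so $\Sigma\cap-\Sigma\neq\emptyset$, i.e.\ some character has finitely generated kernel. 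Then $G=F_k\rtimes_\alpha\Z$ is honestly free-by-cyclic, hence SQ-universal when $k\geq 2$, while $k=1$ forces $G\cong\Z\times\Z$ and \cite{mks} Theorem 4.11 pins down the relator, giving precisely the stated exception. The point is that one never needs to decide whether a given group is strictly ascending: whenever no non-ascending splitting is available, the covering $\Sigma\cup-\Sigma=S^1$ holds and connectedness hands you the second fibration for free, so no train-track input is required. Without this dichotomy (or an equivalent idea) your approach cannot be completed.
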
  
\begin{proof}
This proceeds by using the BNS invariant $\Sigma\subseteq S^1$ of $G$
in \cite{bns} and the proof is very similar to Theorem D of that paper.
The idea is that $\Sigma$ is an open subset of $S^1$ and if $\Sigma\cup
-\Sigma$ is not all of $S^1$ then we have a homomorphism 
$\chi:G\twoheadrightarrow\Z$ that expresses $G$ as a non-ascending
HNN extension. The Magnus decomposition of this extension is such that
$G$ is either in case (i) or case (ii) of Proposition \ref{1rel2gen}, so
that $G$ is SQ-universal by that theorem for case (i) or by
Theorem \ref{sqcs2} for case (ii).

Otherwise we have $\Sigma\cap-\Sigma\neq\emptyset$ as $S^1$ is connected,
which means that the kernel of $\chi$ is finitely generated and
consequently the 1-relator group
$G$ can be expressed as $F_k\rtimes_\alpha\Z$, which is SQ-universal if
$k\geq 2$. If $k=1$ then $\alpha$ is the identity as
$G$ surjects to $\Z\times\Z$, so $G=\Z\times\Z$ and therefore admits
only the above 2-generator 1-relator presentations by \cite{mks}
Theorem 4.11.
\end{proof}

Finally an SQ-universal group can be thought of as one with many
infinite quotients whereas an infinite residually finite group
can be thought of as having many finite quotients. We see that
all 2-generator 1-relator groups therefore have many quotients of some
kind:
\begin{co}
A group with a 2-generator 1-relator presentation is either
SQ-universal or residually finite.
\end{co}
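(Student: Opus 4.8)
The plan is to reduce immediately to the non-SQ-universal case using Corollary~\ref{sq} and then verify residual finiteness for each group that survives. First I would observe that if $G$ is a 2-generator 1-relator group that is \emph{not} SQ-universal, then by Corollary~\ref{sq} it must be one of the following: $\Z$, $\Z\times\Z$, the Klein bottle group, or a strictly ascending HNN extension $F_k*_\theta$ of a finite rank free group (lying in case (i) or case (ii) of that corollary). Thus the entire statement collapses to the assertion that each of these exceptional groups is residually finite.

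The three named groups are immediate: $\Z$ and $\Z\times\Z$ are abelian, and the Klein bottle group is virtually $\Z\times\Z$, so all three are polycyclic and hence residually finite. The real content is therefore the strictly ascending HNN extensions $F_k*_\theta$.

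For these I would appeal to the theorem of Borisov and Sapir that every ascending HNN extension of a finitely generated linear group is residually finite. Since a free group of finite rank embeds in $\mathrm{SL}_2(\Z)$ and so is linear, any ascending HNN extension $F_k*_\theta$ is residually finite, whatever the injective endomorphism $\theta$ may be. This handles case (i) and case (ii) of Corollary~\ref{sq} uniformly, with no need to distinguish whether $G$ contains a Baumslag-Solitar subgroup or whether it is word hyperbolic. Combining this with the previous paragraph, every non-SQ-universal 2-generator 1-relator group is residually finite, which is exactly the claim.

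The main obstacle is precisely the residual finiteness of $F_k*_\theta$, and it is worth stressing why a softer route is unavailable: as noted earlier in the paper, the endomorphism $\theta(a)=a^m,\ \theta(b)=b^n$ with $|m|,|n|>1$ produces a group $F_2*_\theta$ that is not linear over any field, so residual finiteness cannot be read off from linearity of $G$ itself. The Borisov-Sapir argument instead exploits residual finiteness of the free base together with the ascending structure directly, and it is this input, rather than any elementary combinatorial manipulation, that carries the essential weight here.
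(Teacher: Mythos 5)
Your proposal is correct and takes essentially the same route as the paper: reduce via Corollary~\ref{sq} to the exceptional cases and then quote Borisov--Sapir \cite{bosp} for residual finiteness of strictly ascending HNN extensions of finite rank free groups. The only cosmetic differences are that you invoke the Borisov--Sapir theorem in its general form for linear groups rather than citing it directly for free groups, and that you spell out the easy cases $\Z$, $\Z\times\Z$ and the Klein bottle group, which the paper leaves implicit.
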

\begin{proof}
This follows from Corollary \ref{sq} 
because \cite{bosp} proved that
a strictly ascending HNN extension of a finite rank free group
is residually finite (though as mentioned, not necessarily linear). 
\end{proof}

\section{Acylindrically hyperbolic mapping tori of free 
groups}

For the three cases in the last section, we had that the groups in case (i)
were all acylindrically hyperbolic whereas none in case (ii) were.
However when considering groups in case (iii) for SQ-universality, we did
this independently of results on acylindrically hyperbolic groups. It
can therefore be asked which mapping tori of finite rank
free groups are acylindrically hyperbolic and indeed this is exactly
Problem 8.2 in Section 8 of \cite{mios}. Moreover a solution just for
the 1-relator groups in this class would then completely determine
which 2-generator 1-relator groups are acylindrically hyperbolic, which
is their Problem 8.1.

It is clear that an ascending HNN extension $F_k\rtimes_\alpha\Z$
of $F_k$ formed using an automorphism $\alpha$ of finite order in
$\mbox{Out}(F_k)$ will not be acylindrically hyperbolic because of the existence
of an infinite order element in the centre. Thus a possible answer to
Problem 8.2 is that all other ascending HNN extensions of $F_k$ are
acylindrically hyperbolic with the exception of $BS(1,m)$ when $k=1$.
This would imply two mutually exclusive cases for these groups: either 
they are acylindrically hyperbolic or they are generalized 
Baumslag-Solitar groups, and it would also imply in answer to
Problem 8.1 
that a 1-relator
group is acylindrically hyperbolic if and only if it does not contain
an infinite cyclic $s$-normal subgroup. As a partial answer to Problem
8.2 we have
\begin{prop} \label{coho}
If a finitely generated group $G$ of cohomological dimension
2 has a finite index subgroup $H$ splitting over $\Z$ then either
$G$ is acylindrically hyperbolic or it is a generalized Baumslag-Solitar
group.
\end{prop}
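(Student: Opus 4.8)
The plan is to run a case division on the edge group of the splitting of $H$ and then to transfer the resulting dichotomy up to $G$. First I record that, having finite cohomological dimension, both $G$ and its finite index subgroup $H$ are torsion free of cohomological dimension $2$, and neither is cyclic (a cyclic group has cohomological dimension at most $1$). The hypothesis gives a nontrivial action of $H$ on a tree $T$ with infinite cyclic edge stabilisers; I fix one edge stabiliser $E\cong\Z$ and take the action to be minimal, so $H$ has no global fixed point. The key dichotomy is whether $E$ is \emph{weakly malnormal} in $H$, that is whether there is some $h\in H$ with $E\cap hEh^{-1}$ finite.

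If such an $h$ exists, I would invoke the theorem of Minasyan and Osin on groups acting on trees \cite{mios}: a group acting minimally on a tree, fixing no end and possessing a weakly malnormal edge stabiliser, is either virtually cyclic or acylindrically hyperbolic. Here $H$ is not virtually cyclic (it has cohomological dimension $2$), and it fixes no end of $T$, since an end-fixing minimal action would make $E$ commensurated by $H$, contradicting weak malnormality. Hence $H$ is acylindrically hyperbolic, and since acylindrical hyperbolicity is a commensurability invariant \cite{os} the same holds for $G$.

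In the remaining case $E\cap hEh^{-1}$ is infinite for every $h\in H$, so the infinite cyclic group $E$ is $s$-normal in $H$. By Theorem C of \cite{krp}, the group $H$ --- being finitely generated, non cyclic and of cohomological dimension $2$ --- is a generalised Baumslag--Solitar group. It remains to promote this conclusion to $G$, and this transfer is where I expect the main difficulty to lie. I would first pass to the normal core $N=\bigcap_{g\in G}gHg^{-1}$, which is normal of finite index in $G$; as a finitely generated finite index subgroup, $N$ still acts on $T$ with infinite cyclic vertex and edge stabilisers and hence is again a GBS group. By Theorem C it now suffices to produce an infinite cyclic $s$-normal subgroup of $G$.

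To do this I would exploit that $s$-normality is preserved by automorphisms: if $\phi\in\mathrm{Aut}(N)$ and $D$ is $s$-normal in $N$ then so is $\phi(D)$, because $\phi(D)\cap\phi(n)\phi(D)\phi(n)^{-1}=\phi(D\cap nDn^{-1})$ and $\phi(n)$ ranges over $N$ as $n$ does. The crucial structural input, for which I would appeal to the theory of GBS groups (for instance \cite{lgbs}), is that when $N$ is not virtually abelian its $s$-normal infinite cyclic subgroups form a single commensurability class, namely that of the elliptic vertex subgroups. Granting this, the commensurability class of $E$ is characteristic in $N$; since $N$ is normal in $G$, conjugation by any $g\in G$ restricts to an automorphism of $N$, so $gEg^{-1}$ is commensurable with $E$ and thus $E\cap gEg^{-1}$ is infinite for all $g\in G$. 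Then $E$ is $s$-normal in $G$, and Theorem C gives that $G$ is a GBS group. The exceptional case where $N$ is virtually abelian is immediate, since then $G$ is torsion free and virtually $\Z^2$, forcing $G$ to be $\Z^2$ or the Klein bottle group, both of which are GBS groups. The step that genuinely requires care is precisely this uniqueness of the $s$-normal cyclic subgroup up to commensurability, as it is what makes the class characteristic and allows the property to ascend from $N$ to $G$.
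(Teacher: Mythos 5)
Your proposal is correct in substance and shares the paper's skeleton --- a dichotomy on the edge group $E$ (weakly malnormal versus $s$-normal), Kropholler's Theorem C on the GBS side and Minasyan--Osin on the acylindrically hyperbolic side --- but both transfer steps from $H$ up to $G$ are handled differently, and that is where the comparison is instructive. On the acylindrically hyperbolic side you invoke the Minasyan--Osin tree-action theorem directly (minimal action, no fixed end, weakly malnormal edge stabiliser), disposing of the fixed-end case by observing that an end-fixing one-edge splitting makes $E$ commensurated; this is right, since edge stabilisers are nested along rays toward a fixed end and any two infinite subgroups of $\Z$ intersect in finite index. The paper instead quotes Corollaries 2.2 and 2.3 of \cite{mios}, whose hypotheses exclude the case where the edge group equals a vertex group, and so must treat $H\cong BS(1,m)$ as a separate third case; your dichotomy is cleaner and genuinely exhaustive. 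One caveat: the blanket claim that acylindrical hyperbolicity is a commensurability invariant, which you attribute to \cite{os}, overstates what was known; the precise statement you need (a group with an acylindrically hyperbolic finite-index subgroup is acylindrically hyperbolic) is \cite{mios} Lemma 3.8, which is what the paper cites. On the GBS side the divergence is larger: the paper finishes in one line by citing \cite{krp} Corollary 3(ii), which says exactly that a torsion-free group with a finite-index GBS subgroup is itself GBS, whereas you reprove this transfer by hand via the normal core $N$, the characteristic commensurability class, and a second application of Theorem C to $G$. Your route does work, and the structural fact you flag --- in a non-elementary GBS group the $s$-normal infinite cyclic subgroups are precisely the infinite elliptic subgroups, all commensurable --- is true and provable by elementary Bass--Serre arguments (a hyperbolic element generating an $s$-normal cyclic subgroup would force a group-invariant axis, hence an elementary group, while elliptic subgroups are pairwise commensurable because edge groups have finite index in adjacent vertex groups); but you neither prove it nor locate it precisely, and the appeal to Kropholler's corollary makes this whole apparatus unnecessary. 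Two small slips to repair if you keep your route: the tree on which $N$ acts with infinite cyclic \emph{vertex} stabilisers is the GBS tree of $H$ furnished by Theorem C, not the original splitting tree $T$, whose vertex groups need not be cyclic; and $E$ itself need not lie in $N$, so you should replace it by $E\cap N$ (still infinite cyclic, and still $s$-normal in $N$) or by a vertex group of the GBS structure on $N$.
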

\begin{proof}
We can apply \cite{krp} Theorem C to $H$ as it also has cohomological
dimension 2. This implies that
if $H$ splits over $A\cong\Z\leq H$ and
$A$ is $s$-normal then $H$ is a generalised Baumslag-Solitar
group and so is $G$ by \cite{krp} Corollary 3 (ii)
as it is torsion free. Otherwise we can
apply Corollaries 2.2 and 2.3 of \cite{mios} which state that if
$H$ is an amalgamated free product or HNN extension over an edge
group which is not $s$-normal and not equal to a vertex group under any 
inclusion then $H$ is acylindrically hyperbolic and therefore $G$ is
by \cite{mios} Lemma 3.8. However if $A\cong\Z$ is equal to a vertex
group then we have $H=BS(1,m)$ which is also a generalised
Baumslag-Solitar group.
\end{proof}
\begin{co} \label{ahyp}
An ascending HNN extension $F_k*_\theta$ for $k\geq 2$
that virtually splits over
$\Z$ is either acylindrically hyperbolic or is virtually
$F_k\times\Z$.
\end{co}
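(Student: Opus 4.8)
The plan is to feed $G=F_k*_\theta$ into Proposition~\ref{coho} and then resolve the generalized Baumslag--Solitar (GBS) alternative by hand. First note that $G$ is torsion free, finitely presented and of cohomological dimension $2$: it is the fundamental group of a graph of free groups (one vertex group $F_k$, one edge), so $\mathrm{cd}(G)\leq 2$, and it is not free. The hypothesis that $G$ virtually splits over $\Z$ provides a finite index subgroup $H$ splitting over $\Z$, so Proposition~\ref{coho} applies directly and tells us that $G$ is either acylindrically hyperbolic, in which case we are done, or a GBS group. Everything now reduces to showing that a GBS group which is an ascending HNN extension $F_k*_\theta$ with $k\geq 2$ is virtually $F_n\times\Z$.

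So assume $G$ is GBS. By \cite{krp} Theorem~C, in the form that the noncyclic finitely generated groups of cohomological dimension $2$ with an infinite cyclic $s$--normal subgroup are exactly the GBS groups, $G$ contains an infinite cyclic $s$--normal subgroup $A=\langle\alpha\rangle$. The key idea is to study how $A$ acts on the Bass--Serre tree $T$ of the \emph{ascending HNN} description of $G$ itself, in which every vertex stabilizer is a conjugate of $F_k$ and every edge stabilizer a conjugate of $\theta(F_k)$. I would first rule out $A$ being elliptic: if $A$ fixes a vertex we may conjugate so that $A\leq F_k$, and then for every $g\in F_k$ the subgroup $A\cap gAg^{-1}$ is infinite, forcing the maximal cyclic subgroup $M\leq F_k$ containing $A$ to satisfy $M\cap gMg^{-1}\neq 1$. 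Since maximal cyclic subgroups of free groups are malnormal this gives $g\in M$ for all $g\in F_k$, whence $F_k=M$ is cyclic, contradicting $k\geq 2$. Hence $A$ is hyperbolic on $T$.

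Let $\ell\subseteq T$ be the axis of $\alpha$. Because $A$ is $s$--normal, for each $g\in G$ the elements $\alpha$ and $g\alpha g^{-1}$ share a common power, so they are hyperbolic isometries with the same translation length and the same axis; thus $g\ell=\ell$ and $G$ preserves $\ell$ setwise. Restricting to $\ell\cong\R$ gives $\pi\colon G\to\mathrm{Isom}(\ell)$, and for every $g$ the cyclic groups $A$ and $gAg^{-1}$ project to the \emph{same} subgroup of $\mathrm{Transl}(\ell)\cong\Z$ (equal translation length). A short index computation on $\ell$ then yields $[A:A\cap gAg^{-1}]=[gAg^{-1}:A\cap gAg^{-1}]$ for all $g$, i.e. the modular homomorphism of the GBS group $G$ is trivial, so $G$ is unimodular. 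A unimodular GBS group has a finite index subgroup with non-trivial centre; applying \cite{lgbs} Proposition~4.1 to that subgroup identifies it as $F_n\rtimes_\alpha\Z$ with $\alpha$ of finite order in $\mathrm{Out}(F_n)$, and the first Proposition of Section~2 then exhibits it as finitely covered by $\Gamma\times S^1$, so $G$ is virtually $F_n\times\Z$, as required.

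I expect the main obstacle to be the middle step: transferring the abstractly given $s$--normal cyclic subgroup $A$ onto the concrete Bass--Serre tree of the ascending HNN structure and controlling its dynamics there. The elliptic case leans on malnormality of maximal cyclic subgroups of free groups and is where the hypothesis $k\geq 2$ is genuinely used (for $k=1$ the group $BS(1,m)$ survives as a non-unimodular GBS exception). An alternative, should the index bookkeeping on $\ell$ prove awkward, is to show by a Nielsen--Schreier rank count that hyperbolicity of $A$ forces $\theta$ to be onto---so that $G=F_k\rtimes_\phi\Z$ honestly---and then to deduce that $\phi$ has finite order in $\mathrm{Out}(F_k)$ before quoting \cite{lgbs} Proposition~4.1; this route, however, requires the growth dichotomy for free-group automorphisms and is less self-contained.
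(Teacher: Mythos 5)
Your argument is correct, and after the shared first step it genuinely diverges from the paper's. The paper also feeds $G$ into Proposition \ref{coho} via cohomological dimension $2$, but then kills the GBS alternative in two lines by quoting two external results: ascending HNN extensions of finite rank free groups are residually finite (Borisov--Sapir \cite{bosp}), and by \cite{lbs} Corollary 7.7 a residually finite GBS group is virtually $F_n\times\Z$ (or $BS(1,m)$, excluded here since $G$ contains $F_2$). Your replacement for this step is a direct Bass--Serre argument on the HNN tree: Kropholler's Theorem C supplies the $s$-normal $A\cong\Z$, malnormality of maximal cyclic subgroups of $F_k$ rules out ellipticity of $A$ (this is where $k\geq 2$ enters, correctly), $s$-normality forces all of $G$ to preserve the axis, and equality of translation lengths gives unimodularity, after which \cite{lgbs} Proposition 4.1 and the Proposition of Section 2 finish. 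This is sound; the only points you gloss are (a) that the index-ratio homomorphism computed from your particular $A$ really is the modular homomorphism of the GBS group -- true because in a non-elementary GBS group every commensurated infinite cyclic subgroup is elliptic on the GBS tree (a hyperbolic one would force that tree to be a line), so all such subgroups lie in one commensurability class; alternatively you can bypass modularity language entirely, since your index equality directly yields a power of $\alpha$ generating a normal $\Z$ whose centralizer is the desired finite index subgroup with centre -- and (b) that finite index subgroups of GBS groups are again GBS, needed to apply \cite{lgbs} Proposition 4.1. Comparing what each buys: the paper's route is shorter and reuses residual finiteness, which it needs elsewhere in Section 3 anyway, but rests on the deep Borisov--Sapir theorem; your route is more self-contained and geometric, and it proves strictly more: since the $G$-action on the HNN tree is minimal, $G$ preserving a line forces that tree to be a line, i.e. $[F_k:\theta(F_k)]=1$ (Nielsen--Schreier excludes proper finite index images for $k\geq 2$), so in the GBS case $\theta$ is necessarily an automorphism and a \emph{strictly} ascending $F_k*_\theta$ satisfying the hypotheses is always acylindrically hyperbolic.
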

\begin{proof}  
All ascending HNN extensions of $F_k$ have geometric and thus
cohomological dimension 2, so by Proposition \ref{coho} we obtain
acylindric hyperbolicity unless we have a 
generalised Baumslag-Solitar group. They are all also residually
finite, but by \cite{lbs} Corollary 7.7
a generalised Baumslag-Solitar group is not residually finite
unless it is virtually $F_k\times\Z$ or $BS(1,m)$ when $k=1$.
\end{proof}

We now specialise to the case where the mapping torus is formed using
an automorphism, so we are back in the class of free-by-cyclic groups
$G=F_k\rtimes_\alpha\Z$, where we can say which such groups are
acylindrically hyperbolic. Of course this will be true if
$G$ is word hyperbolic or
hyperbolic with respect to a collection of proper subgroups.
There are plenty of examples of word hyperbolic free-by-cyclic
groups when $k\geq 3$. When $k=2$ there are none, but most are
relatively hyperbolic with respect to the peripheral $\Z\times\Z$
subgroup because they will be the fundamental group of a finite
volume hyperbolic 1-punctured torus bundle. The exceptions are
when the monodromy has finite order, giving the virtually
$F_2\times\Z$ case which cannot be acylindrically hyperbolic,
and parabolic monodromy where all groups will be
commensurable with $G=F(a,b)\rtimes_\lambda\Z$.

Here we consider the case where $[\alpha]\in \mbox{Out}(F_k)$ is a
polynomially growing automorphism, with recent results on this in
\cite{cl} which itself utilises the train track technology of Bestvina,
Feighn and Handel. The facts from \cite{cl} Section 5 that we need are:\\
\textbullet If $[\alpha]$ is polynomially growing then there is a positive
power $[\alpha^k]$ in UPG$(F_k)$, which is the subgroup of polynomially
growing outer automorphisms whose abelianised action on $\Z^k$ has
unipotent image.\\
\textbullet If $k\geq 2$ then every element of $\mbox{UPG}(F_k)$ has in its
class an automorphism $\alpha$ of $F_k$ such that either:\\
(i) There exists a non trivial $\alpha$-invariant splitting
$F_k=B_1*B_2$, so that $\alpha$ restricts to an automorphism of $B_1$
and also of $B_2$.\\
(ii) There exists a non trivial splitting $F_k=B_1*\langle x\rangle$, where
$B_1$ is $\alpha$-invariant and $\alpha(x)=xw$ for $w$ an element of $B_1$.\\ 
We also use the following two statements which follow immediately from
Corollary 4.22 and Theorem 1.4 in \cite{os04}:
\begin{lem}
If $G$ is a finitely presented group that is torsion free and
hyperbolic relative to
a collection of proper subgroups $\{H_1,\ldots H_l\}$ 
(the peripheral subgroups) then\\
(i) Any Baumslag-Solitar subgroup of $G$ is conjugate into some $H_i$.\\
(ii) Any $H_i$ is malnormal, so that if there is
 $g\in G$ with $H_i\cap gH_ig^{-1}$ non trivial then $g\in H_i$. Moreover
if there is $g\in G$ with $H_i\cap gH_jg^{-1}$ non trivial then $i=j$.
\end{lem}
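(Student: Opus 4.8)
The plan is to derive both parts directly from the structure theory of relatively hyperbolic groups assembled in \cite{os04}, exactly as the surrounding sentence advertises; the role of the torsion-free hypothesis is to upgrade the ``almost malnormal'' conclusions available there into genuine malnormality and genuine triviality of intersections. I would prove (ii) first, since the argument for (i) will feed on it.

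For (ii), the input is the almost malnormality of the peripheral family supplied by Theorem 1.4 of \cite{os04}: for any $g$, the intersection $H_i\cap gH_jg^{-1}$ is finite unless $i=j$ and $g\in H_i$. Since $G$ is torsion free it contains no nontrivial finite subgroup, so \emph{every} nontrivial subgroup is infinite. Hence a nontrivial intersection $H_i\cap gH_jg^{-1}$ is automatically infinite, and almost malnormality then forces $i=j$ and $g\in H_i$. This is precisely statement (ii), so this part is immediate.

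For (i), let $B\le G$ be a Baumslag-Solitar subgroup with generators $a,t$ and defining relation $ta^mt^{-1}=a^n$, where $m,n\neq 0$. By the classification of infinite-order elements in a relatively hyperbolic group (Corollary 4.22 of \cite{os04}), $a$ is either parabolic, i.e.\ conjugate into some $H_i$, or loxodromic. The key step is to rule out the loxodromic case. If $a$ were loxodromic it would lie in a unique maximal elementary (virtually cyclic) subgroup $E=E_G(a)$, and this $E$ is almost malnormal. The relation gives $a^n\in E$ and also $a^n=ta^mt^{-1}\in tEt^{-1}=E_G(tat^{-1})$; since $a^n$ has infinite order and sits inside both maximal elementary subgroups, uniqueness forces $E=tEt^{-1}$, so $t$ normalizes $E$. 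As $E$ is infinite, almost malnormality together with torsion-freeness (so $E\cap tEt^{-1}=E\neq 1$) yields $t\in E$, whence $B=\langle a,t\rangle\le E$ is virtually cyclic. But no Baumslag-Solitar group $BS(m,n)$ with $m,n\neq 0$ is virtually cyclic, a contradiction. Therefore $a$ is parabolic; conjugating $B$ we may assume $a\in H_i$. Then $a^m,a^n\in H_i$ are nontrivial, and $a^n=ta^mt^{-1}\in tH_it^{-1}$, so $a^n$ is a nontrivial element of $H_i\cap tH_it^{-1}$. By part (ii) this gives $t\in H_i$, hence $B\le H_i$; undoing the conjugation shows the original $B$ is conjugate into $H_i$.

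The hard part is the step that excludes a loxodromic generator: one must assemble the facts that a loxodromic element lies in a unique virtually cyclic maximal elementary subgroup and that this subgroup is almost malnormal, and then see that the single defining relation of the Baumslag-Solitar group forces the stable letter $t$ into that subgroup. Once this is in place, everything else is routine bookkeeping with the torsion-free hypothesis, the only remaining point to check being the elementary fact that $BS(m,n)$ is never virtually cyclic.
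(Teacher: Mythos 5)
Your proof is correct, and it amounts to filling in the details behind the paper's one-line justification: the paper gives no argument at all, asserting only that both parts follow immediately from Corollary 4.22 and Theorem 1.4 of \cite{os04}. Your part (ii) is exactly the intended deduction --- torsion-freeness promotes Osin's almost malnormality of the peripheral family (Theorem 1.4) to genuine malnormality. For part (i) the paper simply quotes Corollary 4.22 of \cite{os04}, which in substance is the assertion that no loxodromic element of a relatively hyperbolic group satisfies a Baumslag--Solitar relation $ta^mt^{-1}=a^n$; you instead re-derive this from the more primitive structure theory (unique maximal elementary subgroups $E_G(g)$, almost malnormal because $G$ is hyperbolic relative to the augmented family $\{H_i\}\cup\{E_G(g)\}$), which is essentially Osin's own proof of that corollary, so the mathematics goes through. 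Two minor points. First, you attribute the parabolic/loxodromic dichotomy to Corollary 4.22; that dichotomy is just the definition of a loxodromic (i.e.\ non-parabolic, infinite-order) element, and the real content of Corollary 4.22 is precisely the exclusion your paragraph then proves by hand. Second, applying uniqueness of the maximal elementary subgroup containing $a^n$ to conclude $E=tEt^{-1}$ tacitly uses that $a^n$ is itself loxodromic; this is true (if $a^n$ lay in a maximal parabolic $P$ then $a^n\in P\cap aPa^{-1}$, and malnormality of $P$ --- your part (ii) --- would force $a\in P$, contradicting loxodromy of $a$), but the step is also dispensable: $a^n\in E\cap tEt^{-1}$ already makes that intersection infinite, so almost malnormality of $E$ gives $t\in E$ directly, which is all your argument needs.
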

\begin{thm} \label{poly}
If $[\alpha]\in\mbox{Out}(F_k)$ is a polynomially growing automorphism
then $G=F_k\rtimes_\alpha\Z$ is not hyperbolic
relative to any collection of proper subgroups, but it is acylindrically
hyperbolic unless $[\alpha]$ has finite order in $\mbox{Out}(F_k)$.
\end{thm}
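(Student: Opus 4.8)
The statement is really two essentially independent claims, and for both I would begin by reducing to the case $[\alpha]\in\mathrm{UPG}(F_k)$. By the first fact quoted from \cite{cl} some positive power $[\alpha^m]$ is UPG, and $F_k\rtimes_{\alpha^m}\Z$ is a finite index subgroup of $G$; since being hyperbolic relative to some family of proper subgroups passes to finite index subgroups, and since acylindrical hyperbolicity is a commensurability invariant, it suffices to treat $\alpha^m$. Moreover $G$ depends only on the outer class of the monodromy (as recorded in the isomorphism lemmas above for $F_2$), so I may replace $\alpha$ by the good representative of the second fact from \cite{cl}: there is a nontrivial $\alpha$-invariant free splitting of type (i) $F_k=B_1*B_2$ or of type (ii) $F_k=B_1*\langle x\rangle$ with $\alpha(x)=xw$ and $w\in B_1$. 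Throughout I write $t$ for the stable letter, so $tft^{-1}=\alpha(f)$.

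For the first assertion I would argue by contradiction, assuming $G$ is hyperbolic relative to proper subgroups. First I note that a UPG automorphism always has a nontrivial fixed point: descending through the invariant splittings one reaches a rank one invariant free factor, on which any UPG automorphism is the identity, so $\mathrm{Fix}(\alpha)\neq 1$. As $\alpha$ fixes $\mathrm{Fix}(\alpha)$ pointwise and $t$ commutes with it, $C_G(t)\supseteq\mathrm{Fix}(\alpha)\times\langle t\rangle$ contains a copy of $\Z^2$. This $\Z^2$ is a Baumslag--Solitar subgroup, so by part (i) of the lemma preceding the theorem it is conjugate into a peripheral subgroup, and after conjugating the peripheral family I may assume $t\in H$ for one peripheral $H$. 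Malnormality (part (ii) of that lemma) then forces $C_G(t)\le H$, and in particular $\mathrm{Fix}(\alpha)\le H$.

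The heart of the argument is then to show $F_k\le H$ by induction on $k$ through the invariant splitting, which gives $H\supseteq\langle F_k,t\rangle=G$ and contradicts properness. The base case $k=1$ is $F_1=\mathrm{Fix}(\alpha)\le H$. In a type (i) splitting I apply the inductive hypothesis to $\alpha|_{B_1}$ and $\alpha|_{B_2}$, whose fixed subgroups lie in $\mathrm{Fix}(\alpha)\le H$ and which share the same $t\in H$, obtaining $B_1,B_2\le H$. In a type (ii) splitting the hypothesis gives $B_1\le H$, and then the defining relation $txt^{-1}=xw$ rearranges to $x^{-1}tx=wt$; since $w\in B_1\le H$ and $t\in H$, the element $wt$ lies in $H\cap x^{-1}Hx$ and has infinite order, so malnormality yields $x\in H$ and hence $F_k=\langle B_1,x\rangle\le H$. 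It is this relation $x^{-1}tx=wt$, feeding malnormality, that I expect to drive the whole induction.

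For the second assertion, suppose $[\alpha]$ has infinite order in $\mathrm{Out}(F_k)$ (so $k\ge2$). The same invariant splitting of $F_k$ induces an essential splitting of $G$ over $\Z$: in type (i), $G=(B_1\rtimes_\alpha\Z)*_{\langle t\rangle}(B_2\rtimes_\alpha\Z)$ amalgamated over $\langle t\rangle\cong\Z$, and in type (ii) the relation $x^{-1}tx=wt$ exhibits $G$ as an HNN extension of $B_1\rtimes_\alpha\Z$ with stable letter $x$ and associated cyclic subgroups $\langle t\rangle,\langle wt\rangle$; both splittings are essential because $B_1,B_2\neq1$. Hence $G$ virtually splits over $\Z$, so Corollary \ref{ahyp} applies and $G$ is acylindrically hyperbolic or virtually $F_k\times\Z$. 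The latter happens exactly when a finite index subgroup has infinite centre, i.e.\ when some power of $\alpha$ is inner, i.e.\ when $[\alpha]$ has finite order in $\mathrm{Out}(F_k)$, which is excluded; so $G$ is acylindrically hyperbolic. The step I expect to need the most care is the bookkeeping around the reductions: guaranteeing $\mathrm{Fix}(\alpha)\neq1$, checking that a single peripheral $H$ and a single $t$ remain valid at every level of the nested induction (which holds because all relevant fixed subgroups sit inside $\mathrm{Fix}(\alpha)\le H$), and ensuring the induced $\Z$-splittings are essential so that Proposition \ref{coho} and Corollary \ref{ahyp} genuinely apply rather than landing in the generalized Baumslag--Solitar case.
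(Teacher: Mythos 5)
Your proof of the second assertion is correct and coincides with the paper's: pass to a power so that $[\alpha]\in\mathrm{UPG}(F_k)$, observe that the splittings (i)/(ii) give splittings of this finite index subgroup over $\Z$, and invoke Corollary \ref{ahyp}, excluding the virtually $F_k\times\Z$ case since $[\alpha]$ has infinite order. Your strategy for the first assertion (manufacture a $\Z^2$, place it in a malnormal conjugate of a peripheral subgroup, then propagate membership through the hierarchy of splittings using malnormality and the rearranged relation $x^{-1}tx=wt$) is also the paper's strategy. However, your execution has a genuine gap, and it is exactly the point to which the paper devotes the bulk of its proof.

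The gap is this: the result quoted from \cite{cl} only provides an invariant splitting for \emph{some automorphism in the outer class}. You may normalise $\alpha$ so that the top-level splitting is $\alpha$-invariant, but when you descend to $B_1$ the next splitting is invariant only after replacing $\alpha|_{B_1}$ by $\iota_{w_1}\circ\alpha|_{B_1}$ for some $w_1\in B_1$, and this adjustment recurs at every level. Consequently the rank-one factor at the bottom is invariant not under $\alpha$ but under $\gamma=\iota_{w_n}\cdots\iota_{w_1}\alpha$, so your claim $\mathrm{Fix}(\alpha)\neq1$ does not follow; it is in fact false in general. Concretely, let $T(a)=a$, $T(b)=ba$ and $\varphi=\iota_b\circ T$; then $[\varphi]=[T]$ is UPG, but $\mathrm{Fix}(\varphi)=1$: a fixed word would satisfy $T(x)=b^{-1}xb$, and writing $x$ in the basis $a_m=b^{-m}ab^m$ of the normal closure of $a$, the map $T$ never raises the maximal index $m$ occurring in $x$, whereas conjugation by $b$ raises it by exactly one. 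Moreover this defect infects your induction even with the good top-level normalisation: the rank three automorphism $\alpha(a)=bab^{-1}$, $\alpha(b)=b(ba)b^{-1}$, $\alpha(c)=cb$ is UPG and respects the type (ii) splitting $F_3=\langle a,b\rangle*\langle c\rangle$, yet its restriction to $\langle a,b\rangle$ is precisely $\iota_b\circ T$, with trivial fixed subgroup; so the inductive hypothesis you want to apply to $\alpha|_{B_1}$ (``whose fixed subgroups lie in $\mathrm{Fix}(\alpha)$ and which share the same $t$'') is simply unavailable. The element of $G$ inducing the adjusted automorphism at level $n$ is $s_n=w_n\cdots w_1t$, not $t$, so the $\Z^2$'s one actually obtains are $\langle s_n,y\rangle$, and there is no a priori reason they lie in the peripheral conjugate containing $t$. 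The paper's proof is your outline with exactly this bookkeeping supplied: each vertex of the hierarchy is labelled by $\gamma_n=\iota_{w_n}\cdots\iota_{w_1}\alpha$ and the stable letter $s_n=w_n\cdots w_1t$, the argument starts at the leaves (where $\langle s_m,y\rangle\cong\Z^2$ lies in some malnormal peripheral conjugate $C$), and membership in $C$ is propagated upward via malnormality, the identity $s_{j-1}=w_j^{-1}s_j$, and $x^{-1}s_{j-1}x=us_{j-1}$, until $G\leq C$. Your argument becomes correct once rewritten in this form, but as it stands the induction does not close.
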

\begin{proof}
By taking a power of $\alpha$, which corresponds to a finite index subgroup,
we can assume that $[\alpha]$ is in UPG$(F_k)$. Thus let us repeatedly
apply options (i) and (ii) until we have decomposed $F_k$ into a free
product of cyclic groups. 
These provide splittings over $\Z$ of this finite index subgroup and so
Corollary \ref{ahyp} immediately implies the second part of the statement.

We can picture this process of repeatedly splitting $G$ over $\Z$ 
as a finite rooted tree, as in \cite{cl} Lemma 5.10 which involves
describing it as a hierarchy. We have $F_k$ at the root vertex, with every
other vertex being labelled by a proper non trivial free factor
of $F_k$. To explain this, we split $F_k\rtimes_\alpha\Z$
using either Case (i) or Case (ii), which in general changes the automorphism
$\alpha$ but only to something equal in $\mbox{Out}(F_k)$ which we can
also call $\alpha$ for now.

If Case (i) is used then we have an $\alpha$-invariant
decomposition $F_k=B_1*B_2$, with stable letter $t$ inducing $\alpha$
by conjugation, which means that 
$F_k\rtimes_\alpha\Z=\langle t,F_k\rangle$ has been
split as an amalgamated free product 
$\langle t,B_1\rangle*_{\langle t\rangle}\langle t,B_2\rangle$ over
$\Z=\langle t\rangle$. We then draw two vertices labelled $B_1$ and $B_2$
as immediate descendents of the root vertex $F_k$.

If however case (ii) is used to split
$F_k=B*\langle x\rangle$ where $B$ has rank $k-1$ and $txt^{-1}=xw$ for
$w\in B$, then this corresponds to the HNN extension
$\langle t,F_k\rangle=\langle t,B\rangle*_{\langle x\rangle}$ where
$x^{-1}tx=wt$, so now $x$ is the stable letter of this HNN extension
conjugating the
infinite cyclic groups $\langle wt\rangle$ and $\langle t\rangle$.  
In this case we have only one immediate descendent vertex $B$ of
the root.

Having done this once, as the new vertex labels
$B_1,B_2$ or $B$ will be $\alpha$-invariant, we can take the appropriate
restriction of $\alpha$ and replace $\langle t,F_k\rangle$ by the
subgroup
$\langle t,B_1\rangle$, $\langle t,B_2\rangle$ or $\langle t,B\rangle$,
which is also free by cyclic but where the free part has smaller rank.
Moreover this restriction will also be a polynomially growing 
outer automorphism whose abelianised action has unipotent image, so
it is still in UPG. Thus we can then (on changing the restriction of
$\alpha$ by an inner automorphism)
apply Case (i) or Case (ii) again,
creating the next level of descendents and labelling them accordingly.
This continues  until we reach 
the leaf vertices (those with
no descendents), which are each labelled by an infinite cyclic free factor 
of $F_k$, so no further splitting occurs.

However each time we form the descendents of a vertex
by applying Case (i) or (ii) to the free factor of
$F_k$ labelling this vertex, we are 
liable to change the given
automorphism within the outer automorphism group
of this free factor. Now
we also need to keep track of the actual automorphisms and how they are
induced,
so there are further labels on each vertex (apart from the leaf
vertices) consisting of an automorphism of $F_k$ equal in $\mbox{Out}(F_k)$
to $\alpha$, along with a ``stable letter'' which is an element of 
$F_k\rtimes_\alpha\Z$ inducing this automorphism under conjugation.
This is defined inductively on the levels of the tree as follows: on
splitting our root vertex $F_k$, we have an automorphism equal to $\alpha$
in $\mbox{Out}(F_k)$ respecting this splitting. Now taking the 
composition of $\alpha$ with
an inner automorphism does not change $F_k\rtimes_\alpha\Z$, so we can assume
that $\alpha$ respects this splitting, whereupon we label the root vertex
$F_k$ with this $\alpha$ and the overall stable letter $t$ that induces
$\alpha$ by conjugation. This completes our labelling for the zeroth level.

Now suppose that $F_k=G_0,G_1,\ldots ,G_n$ are vertices on each successive
level leading to the vertex $G_n$, which is not a cyclic group as otherwise
it is a leaf vertex and therefore requires no further labelling. Therefore
$G_n$ has successor(s) $G_{n+1}$ in case (ii) (as well as
$G'_{n+1}$ if case (i) is taken). In order to describe the labels for
$G_{n+1}$ (and $G'_{n+1}$) if this too is not a leaf vertex, we suppose that
$G_n$ is already labelled by the automorphism
\[\gamma_n=\iota_{w_n}\iota_{w_{n-1}}\ldots \iota_{w_1}\alpha\mbox{ of }F_k,\]
where $w_i$ is an element of $G_i$ and $\iota_x$ stands for the inner
automorphism of $F_k$ consisting of conjugation by $x\in F_k$. Moreover
we assume that both $G_n$ and $G_{n+1}$ (and $G'_{n+1})$ are 
$\gamma_n$-invariant. We further assume that also labelling $G_n$ is 
the element $w_nw_{n-1}\ldots w_1t\in F_k\rtimes_\alpha\Z$ which we call
the stable letter $s_n$. 
Of course
if now $G_{n+1}$ (or $G'_{n+1}$) is cyclic then we do not need to label
these any further,
but otherwise we apply the splitting to $G_{n+1}$ under the automorphism
obtained by restricting $\gamma_n$ to $G_{n+1}$, thus we obtain an
automorphism of $G_{n+1}$ equivalent to $\gamma_n|_{G_{n+1}}$
in Out$(G_{n+1})$ which preserves $G_{n+2}$ (and $G'_{n+2}$) and which acts
accordingly in case (ii). Thus there is an element $w_{n+1}\in G_{n+1}$
such that 
we can define the automorphism $\gamma_{n+1}$ of $F_k$ to be
\[\iota_{w_{n+1}}\gamma_n=
\iota_{w_{n+1}}\iota_{w_n}\iota_{w_{n-1}}\ldots \iota_{w_1}\alpha,\]
thus $\gamma_{n+1}$ restricts to an automorphism 
of $G_{n+1}$ that preserves $G_{n+2}$ (and $G'_{n+2}$). We then 
further label the vertex $G_{n+1}$ by this automorphism $\gamma_{n+1}$
and the stable letter $s_{n+1}=w_{n+1}s_n$ that will induce this
map by conjugation.

Having completed the labelling, let us reverse the whole process and
build up the free-by-cyclic group $G$ from these splittings.
Let us start at a leaf vertex $G_{m+1}=\langle y\rangle$ of maximum
depth. As the automorphism $\gamma_m$ obtained from the vertex $G_m$
above preserves $G_{m+1}$ and $y$ is part of a free basis for $F_k$,
we must have $\gamma_m(y)=y$ as 
$[\gamma_m]=[\alpha]\in\mbox{UPG}(F_k)$ and so
all eigenvalues of the abelianised map equal 1. Thus $s_mys_m^{-1}=y$
giving us a copy of $\Z^2=\langle s_m,y\rangle$ in $G$, which must therefore
lie in a subgroup $C$ which is a conjugate
of one of the peripheral subgroups $H_i$ and so $C$ is also malnormal.
If $G_m$ gave rise in case (i) to $G_{m+1}=\langle y\rangle$ and 
$G'_{m+1}$, we would also have $G'_{m+1}=\langle z\rangle$ by 
maximality of depth. Thus $G_m=\langle y\rangle *\langle z\rangle$ and
the same stable letter $s_m$ commutes with
$z$, so we conclude by malnormality that $s_m,y,z\in C$. If however we had
case (ii) then again there is $z$ with $G_m=\langle y\rangle *\langle z\rangle$
but now we have $\gamma_m(y)=y$ and $\gamma_m(z)=zy^i$. Thus
$z^{-1}s_mz=y^is_m\in C$ so again $z\in C$ by malnormality. Hence in either
case we obtained from a leaf vertex a copy of $\Z^2$ lying in a conjugate
$C$ of a peripheral subgroup and shown that 
$\langle s_m, G_m\rangle$ is in $C$ too.

We now contract edges starting at the leaf vertices, one edge at a time if
the lower level vertex of this edge has one immediate descendent or a pair of 
vertices otherwise. 
We suppose that at each
leaf vertex $G_j$ of the current contraction of the tree,
we have $\langle s_j,G_j\rangle$ lying in a conjugate
$C$ of a peripheral subgroup (although $C$ could vary over the current leaf 
vertices). First suppose that $G_j$ is the only current descendent of 
$G_{j-1}$, so that $G_j$ was obtained by applying case (ii) to $G_{j-1}$
and the labelling indicates that $G_{j-1}=G_j*\langle x\rangle$, with
$\gamma_{j-1}$ sending $x$ to $xu$ for $u$ a
word in $G_j$. Thus $s_{j-1}xs_{j-1}^{-1}=xu$, but $s_{j-1}=w_j^{-1}s_j$ for
$w_j\in G_j$ so $s_{j-1}$ is already in $C$. Consequently 
$x^{-1}s_{j-1}x=us_{j-1}\in C$ implies that 
$x$ and therefore $\langle s_{j-1}, G_{j-1}\rangle$
is in $C$ as well.

Now suppose that there are other current descendents of $G_{j-1}$ in addition 
to $G_j$. This means that after further contraction if necessary there will
be the two leaf vertices $G_j$ and $G'_j$ which are the
descendents of $G_{j-1}=G_j*G'_j$, and we have by our assumption that 
$\langle s_j,G_j\rangle$ lies
in $C$ and similarly
$\langle s'_j,G'_j\rangle$ also lies in some conjugate $C'$ of a
peripheral subgroup. Now $s_{j-1}\in C$ as above, but the same argument also 
says that $s_{j-1}=(w'_j)^{-1}s'_j$ for $w'_j\in G'_j$. Thus the same element
$s_{j-1}$ is in both $C$ and $C'$, hence by malnormality they are equal
and we have $\langle s_{j-1},G_{j-1}\rangle\leq C$.
We can now continue contracting until we are left with the root,
concluding that all of $G=\langle t=s_0,G_0=F_k\rangle$  lies in $C$,
thus $G$ is not relatively hyperbolic with respect to proper subgroups.  
\end{proof}

This adds to results in the literature that provide a description
of free-by-cyclic groups according to the type of hyperbolicity:
for $G=F_k\rtimes_\alpha\Z$ we have that:\\
\textbullet $G$ is word hyperbolic if and only if no positive power of
$\alpha$ sends $w\in F_k\setminus\{id\}$ to a conjugate of itself.\\
\textbullet $G$ is relatively
hyperbolic if and only if $[\alpha]$ is not of polynomial growth. (The if
direction requires one to     
accept the results of  the unpublished manuscript \cite{dl} where the
peripheral subgroups are the mapping tori of the polynomially growing
subgroups under $[\alpha]$ of $F_k$, whereas the only if direction comes
from Theorem \ref{poly}.)\\
\textbullet $G$ is acylindrically hyperbolic but not relatively
hyperbolic if and only if $[\alpha]$ is polynomially growing and of infinite
order in $\mbox{Out}(F_k)$. (The same comment as above applies here, with
``if'' and ``only if'' reversed.)\\
\textbullet $G$ is not acylindrically hyperbolic if and only if
$[\alpha]\in\mbox{Out}(F_k)$ has finite order.
 
Note that all free-by-cyclic groups are known to be large by \cite{meq},
following \cite{def1}, \cite{1rel}
and \cite{hgws}, but are not known to be linear
unless $k=2$.

This leaves us with strictly ascending HNN extensions of finite
rank free groups, which we have seen can be less well behaved and indeed
need not be linear. However
we finish with an example to show that the recent work of Hagen and
Wise in \cite{hgws} on cubulation of ascending HNN extensions of
finitely generated free groups solves a problem in the Kourovka
notebook \cite{kouonl} on linearity of a particular group of this
kind. Problem 17.108
asks whether the group below is linear.

\begin{thm} \label{sap}
The group $\langle a,b,t|tat^{-1}=ab,tbt^{-1}=ba\rangle$ is
linear over $\Z$.
\end{thm}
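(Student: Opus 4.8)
The plan is to show that Sapir's group $S=\langle a,b,t\mid tat^{-1}=ab,\,tbt^{-1}=ba\rangle$ is an ascending HNN extension of a finite rank free group to which the Hagen--Wise cubulation machinery of \cite{hgws} applies, and then to invoke the Agol--Wise consequences to obtain linearity over $\Z$. The first thing I would do is verify that the endomorphism $\theta$ of $F_2=F(a,b)$ given by $\theta(a)=ab$, $\theta(b)=ba$ is injective but not surjective, so that $S\cong F_2*_\theta$ is a genuinely strictly ascending HNN extension. Injectivity is clear since $\{ab,ba\}$ is a basis of a free subgroup of $F_2$ (the two words generate freely), and non-surjectivity follows because the abelianisation map on $\Z^2$ is $\left(\begin{smallmatrix}1&1\\1&1\end{smallmatrix}\right)$, which has determinant $0$, so the image has infinite index; hence $\theta$ is not an automorphism.

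\textbf{The key hypothesis to check} is that $\theta$ is an \emph{irreducible} endomorphism, since \cite{hgws} proves that $F_k*_\theta$ acts properly and cocompactly on a CAT(0) cube complex when $\theta$ is irreducible and the group is word hyperbolic. I would therefore next check word hyperbolicity: for a strictly ascending HNN extension of a free group, word hyperbolicity is equivalent to containing no Baumslag--Solitar subgroup (by the conjecture-free results available for this specific map, or directly via a train-track analysis showing $\theta$ has no periodic conjugacy classes), and one verifies that no power of $\theta$ fixes a conjugacy class, ruling out $BS(1,m)$ and $\Z\times\Z$ subgroups. Irreducibility means no proper free factor of $F_2$ is carried into a conjugate of itself by a power of $\theta$; since $\theta$ mixes $a$ and $b$ symmetrically and its transition matrix $\left(\begin{smallmatrix}1&1\\1&1\end{smallmatrix}\right)$ is (after passing to a square) a Perron--Frobenius positive matrix, no rank-one free factor can be preserved, giving irreducibility.

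\textbf{The main obstacle} I expect is precisely this verification that $\theta$ is irreducible and that $S$ is word hyperbolic, because the cleanest statements of \cite{hgws} require these hypotheses and the theory of train-track maps for injective endomorphisms (as opposed to automorphisms) is only partially developed, as the excerpt itself notes. If the irreducible case of \cite{hgws} applies directly, the argument closes quickly; otherwise one must either appeal to a train-track representative for this explicit $\theta$ or confirm that $S$ falls into the word-hyperbolic range where cubulation is established. Once $S$ is shown to act properly and cocompactly on a CAT(0) cube complex and is word hyperbolic, the Agol--Wise machinery (\cite{ag},\cite{wslng}) stated in the introduction gives that $S$ is virtually special, hence embeds (virtually) in a right-angled Artin group and is therefore linear over $\Z$. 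Since linearity over $\Z$ passes to finite extensions, $S$ itself is linear over $\Z$, answering Problem 17.108 of \cite{kouonl}.
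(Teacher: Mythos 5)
Your high-level strategy coincides with the paper's: realise the group as the strictly ascending HNN extension $F_2*_\theta$ with $\theta(a)=ab$, $\theta(b)=ba$, apply Corollary 6.20 of \cite{hgws} (irreducible endomorphism plus word hyperbolic implies virtually special), and conclude linearity over $\Z$. But the two hypotheses of that corollary are exactly where your proposal has genuine gaps, and you close neither. For word hyperbolicity you lean on the statement that a strictly ascending HNN extension of a free group is word hyperbolic if and only if it contains no Baumslag--Solitar subgroup; as the paper itself notes, that equivalence is an open conjecture from \cite{kap} (only one direction is known), so it cannot be invoked, and the ``train-track analysis'' you offer as an alternative is precisely what is unavailable for injective non-surjective endomorphisms. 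The unconditional tool the paper uses instead is Theorem A of \cite{kap}: if $\theta$ is an immersion (no cancellation in $\theta(x)\theta(y)$ whenever $xy\neq e$, which holds here) and has no periodic conjugacy class (no $w\neq e$ and $i,j>0$ with $\theta^i(w)$ conjugate to $w^j$), then the extension is word hyperbolic. Verifying the absence of periodic conjugacy classes is the real content of the proof: since each generator maps to a word of length $2$, $\theta^i$ multiplies cyclically reduced length by $2^i$, forcing $j=2^i$; a halving argument (write $w=uv$ with $|u|=|v|$, deduce $\theta^i(u)=\theta^i(v)$, hence $u=v$ by injectivity, and iterate until $|w|$ is odd) reduces to a word of odd length, whose numbers of $a^{\pm1}$ and $b^{\pm1}$ letters cannot be equal; but every element in the image of $\theta$ has equal counts, a contradiction.

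Your irreducibility argument also has a real gap. The Perron--Frobenius reasoning does not suffice: the transition matrix $\left(\begin{smallmatrix}1&1\\1&1\end{smallmatrix}\right)$ has Perron eigenvector $(1,1)$, and that direction \emph{is} realised by primitive elements of $F_2$ (e.g.\ $ab$ abelianises to $(1,1)$), so positivity of a power of the matrix cannot rule out a $\theta$-periodic rank-one free factor; abelianised data alone cannot settle this. The paper instead derives irreducibility from the same combinatorial statement: in rank $2$ every proper nontrivial free factor is generated by a primitive element $x$, so reducibility would force $\theta(x)$ or $\theta^2(x)$ to be conjugate to $x^j$ for some $j\neq 0$; the case $j>0$ is excluded by the no-periodic-conjugacy-class argument above, and $j<0$ by applying that argument to $\theta^2(x)$ and $\theta^4(x)$. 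So the architecture of your proposal is correct, but both pillars it rests on are left unproved, and the substitutes you name for them are respectively an open conjecture and an insufficient criterion.
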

\begin{proof}
The group is clearly a strictly ascending HNN extension of a finitely
generated free group
using the injective endomorphism $\theta(a)=ab,\theta(b)=ba$ of $F_2$.
Corollary 6.20 of \cite{hgws} states that this group is virtually
special, and hence linear over $\Z$, if $\theta$ is irreducible
and the HNN extension is word hyperbolic.
Here a reducible endomorphism $\theta$ of $F_k$ is one where
there is a free product decomposition $F_k=F_{k_1}*\ldots *F_{k_r}*C$
for $1\leq r\leq k$ and where each $F_{k_i}$ is non trivial (though
$C$ might be in which case $2\leq r$)
such that $\theta(F_{k_i})$ is sent into a conjugate
of $F_{k_{i+1}}$ with $i$ considered modulo $r$.

Theorem A in \cite{kap} considered the word hyperbolicity of
strictly ascending HNN extensions of a finite rank free group $F(X)$ 
and came up
with a result when the endomorphism $\theta$
is an immersion, which is defined to mean
that for all $x,y\in X\cup X^{-1}$ with $xy\neq e$, the word 
$\theta(x)\theta(y)$ admits no cancellation (so called because the standard
selfmap of the $|X|$-petalled rose is an immersion). 
This implies that if
$w=x_1\ldots x_n$ is written as a reduced word on $X$ for $x_i\in
X\cup X^{-1}$ then no cancellation can occur on the right hand side of
$\theta(w)=\theta(x_1)\ldots \theta(x_n)$. Thus the word length of
$\theta(w)$ is the sum of that for the $\theta(x_i)$. This theorem states that
if there is no periodic conjugacy class, meaning that there is no
$w\in F(X)-\{e\}$ and  $i,j>0$ such that $\theta^i(w)$ is conjugate to
$w^j$ in $F(X)$, then the strictly ascending HNN extension $F(X)*_\theta$
is word hyperbolic.
We can apply this to our group as
$\theta$ is indeed
an immersion and the fact that each generator maps to a word of
length two implies that the word length of $\theta^i(w)$ is $2^i$ times
that of $w$.
But we can assume by conjugating that $w$ is cyclically reduced, in which
case $\theta^i(w)$ is also cyclically reduced, as will be $w^j$ whose
word length is $j$ times that of $w$. Now
if $\theta^i(w)$ and $w^j$ are both cyclically reduced and are conjugate
in $F_2$ then they must have the same length and be cyclic permutations
of each other, thus $j=2^i$. 

Next suppose the word length of $w$ is even and set $w=uv$ for $u,v$ both
half the length of $w$. We have $\theta^i(u)\theta^i(v)$ is a cyclic
permutation of $w^{2^i}$ and so is equal to $(sr)^{2^i}$
without cancellation in this expression, where $w=rs$ (but $r$ and $s$ are
not necessarily of equal length). Thus 
$\theta^i(u)=(sr)^{2^{i-1}}=\theta^i(v)$. 
As an immersion must be injective, we obtain $u=v$ and $\theta^i(u)$
is a cyclic permutation of $w^{2^{i-1}}=u^{2^i}$.
Thus we can
continue to cut $w$ in half at each stage until it has odd length, whilst
still preserving the fact that $\theta^i(w)$ is a cyclic conjugate of
$w^{2^i}$. 
When we reach this point, we
consider the number of appearances of $a^{\pm 1}$ and $b^{\pm 1}$ in $w$, which
cannot be equal, thus nor can it be equal for $w^{2^i}$ or for any cyclic
conjugate thereof.
But as $\theta(a)$ and
$\theta(b)$ both have equal exponent sums, this is true for any element in
the image of $w$ which is a contradiction.

Now showing irreducibility of an endomorphism is straightforward in
rank 2 because any proper non trivial
free factor is just the cyclic group generated by
a primitive element. Thus to show $\theta$ is irreducible it is enough
to rule out there being a primitive element $x$ such that $\theta(x)$
or $\theta^2(x)$ is conjugate to $x^j$ for some $j\neq 0$. 
But this has been eliminated for $j>0$, and also
for $j<0$ by considering $\theta^2(x)$ and $\theta^4(x)$ respectively.
\end{proof}

\Addresses

\end{document}